\numberwithin{equation}{section}
\newtheorem{thm}[equation]{Theorem} 
\newtheorem{prop}[equation]{Proposition}
\newtheorem{lemma}[equation]{Lemma} 
\newtheorem{cor}[equation]{Corollary}
\newtheorem{example}[equation]{Example}
\newtheorem{remark}[equation]{Remark}
\newtheorem{defn}[equation]{Definition}
\DeclareMathOperator{\Ext}{Ext}
\DeclareMathOperator{\Ima}{Im}
\DeclareMathOperator{\Span}{Span}
\newcommand{\e}{\epsilon}
\newcommand{\Z}{{\mathbb Z}}
\newcommand{\Hom}{\mbox{\rm Hom\,}}
\renewcommand{\ker}{\mbox{\rm Ker\,}}
\newcommand{\KK}{\mathbb{K}}
\newcommand{\SH}{\mathscr{H}}
\newcommand{\CC}{\mathbb{C}}  
\newcommand{\ZZ}{\mathbb{Z}}
\newcommand{\HH}{{\rm HH}}
\newcommand{\qdha}{quantum Drinfeld Hecke algebra}
\newcommand{\q}{\mathbf{q}}
\newcommand{\tqdha}{truncated quantum Drinfeld Hecke algebra}
\newcommand{\trunh}{\hat{\SH}_{\q,\kappa,t}}
\DeclareMathAlphabet{\mathpzc}{OT1}{pzc}{m}{it}
\DeclareMathOperator{\ima}{Im}
\DeclareMathOperator{\ch}{char}
\begin{document}
\begin{abstract}
We consider deformations of quantum exterior algebras extended by finite groups.  Among these deformations are a class of algebras which we call truncated quantum Drinfeld Hecke algebras in view of their relation to classical Drinfeld Hecke algebras.  We give the necessary and sufficient conditions for which these algebras occur, using Bergman's Diamond Lemma.  We compute the relevant Hochschild cohomology to make explicit the connection between Hochschild cohomology and truncated quantum Drinfeld Hecke algebras.  To demonstrate the variance of the allowed algebras, we compute both classical type examples and demonstrate an example that does not arise as a factor algebra of a quantum Drinfeld Hecke algebra.

\end{abstract}
\title[Truncated quantum Drinfeld Hecke algebras]{Truncated quantum Drinfeld Hecke algebras and Hochschild cohomology}

\author{L.\ Grimley}
\address{Lauren Grimley, Mathematics Department, Spring Hill College,
Mobile, AL 36608, USA}
\email{lgrimley@shc.edu}
\author{C.\ Uhl}
\address{Christine Uhl, Department of Mathematics, St.\ Bonaventure University,
St\ Bonaventure, NY 14778}
\email{cuhl@sbu.edu}
\maketitle


\section{Introduction}

Drinfeld Hecke algebras occur naturally as deformations of the skew group algebra $S(V)\rtimes~G$, the (semi-direct product) algebra formed by a finite group $G$ acting on the polynomial algebra $S(V)$ over a vector space $V$.  Drinfeld Hecke algebras appear in diverse areas of mathematics, including representation theory and orbifold theory.  For the development of (quantum) Drinfeld orbifold algebras and their relation to Hochschild cohomology see \cite{F-GK}, \cite{DOA}, and \cite{Shroff}.  In \cite{SW2}, Shepler and Witherspoon used Hochschild cohomology to characterize Drinfeld Hecke algebras.  The same authors further developed deformations of skew group algebras, of which Drinfeld Hecke algebras arise as a special case, in \cite{SW3}.  Allowing for the introduction of possible noncommutativity, Levandovskyy and Shepler defined a generalization of Drinfeld Hecke algebras in \cite{LS}.  These algebras were analogously defined as a deformation of the skew group algebra $S_{\q}(V) \rtimes G$, where $S_{\q}(V)$ is the quantum polynomial algebra over $V$.  The class of algebras $\SH_{\q, \kappa}$, a factor algebra of the tensor algebra of $V$ extended by $G$, which satisfy the Poincar\'{e}-Birkhoff-Witt (PBW) property, they name quantum Drinfeld Hecke algebras.  In their paper, Levandovskyy and Shepler used noncommutative Gr{\"o}bner bases theory to enumerate the necessary and sufficient conditions for $\SH_{\q, \kappa}$ to be a quantum Drinfeld Hecke algebra.  Naidu and Witherspoon used this criteria to classify quantum Drinfeld Hecke algebras in terms of Hochschild cohomology in \cite{NW}, generating examples of quantum Drinfeld Hecke algebras coming from actions of complex reflection groups.

In this article, we replace $S_{\q}(V)$ with the quantum exterior algebra, $\Lambda_{\q}(V)$, and explore deformations of $\Lambda_{\q}(V) \rtimes G$.  One such class of deformations arises as a factor algebra of the $\SH_{\q, \kappa}$ of \cite{LS}.  Mirroring the language of \cite{LS}, we call these algebras for which the PBW property is satisfied truncated quantum Drinfeld Hecke algebras.  In Theorem \ref{ifftheorem}, we record the necessary and sufficient conditions for an algebra to be a truncated quantum Drinfeld Hecke algebra, modifying the technique of \cite{LS} and using Bergman's \cite{Bergman} Diamond Lemma.  Our conditions are nearly identical to those of \cite[Theorem 7.6]{LS} but with an analogous version of condition (i) and the addition of conditions (v) and (vi) to respect the truncation on $\Lambda_{\q}(V)$.  Because of condition (i), our characterization allows for truncated quantum Drinfeld Hecke algebras that do not arise as a factor algebra of a quantum Drinfeld Hecke algebra as our choice of group actions is more flexible.  We include one such example at the end of Section 5.  Section 3 establishes the conditions on Hochschild cohomology given by truncated quantum Drinfeld Hecke algebras.  In Section 4, we compute the relevant elements in Hochschild cohomology, culminating in establishing the cohomological elements which produce truncated quantum Drinfeld Hecke algebras.  With this result, we further develop the connection between Hochschild cohomology and truncated quantum Drinfeld Hecke algebras, allowing for another perspective for computations.  Finally, in addition to the example mentioned previously, Section 5 contains examples of truncated quantum Drinfeld Hecke algebras arising from diagonal group actions, whose Hochschild cohomology was computed separately by the first author in \cite{Grimley}.

Let $\KK$ be a field of characteristic not 2 and, unless otherwise noted, $\otimes = \otimes_{\KK}$.


\section{Truncated Quantum Drinfeld Hecke Algebras}

Let $\q=\{q_{ij} \in \KK^*\}$ be a set of nonzero elements of a field $\KK$ with $\ch(\KK) \neq 2$ and let $V$ be a $\KK$-vector space with basis $\{v_1, v_2, ..., v_n\}$.  Let $G \subset GL(V)$ be a finite group.  Restrict to $\KK$ with $\ch(\KK) \nmid |G|$.  We denote $g \in G$ acting on $v_j$ by $^g v_j$.   

\begin{defn}
Let $A$ be a $\KK$-algebra and assume a finite group $G$ acts on $A$.  The skew group algebra (also called cross product algebra) $A \rtimes G$ is $A \otimes \KK G$ as a vector space with multiplication $(ag)(bh)=a(^g b)gh$ for all $a,b \in A$ and $g,h \in G$.
\end{defn}

We suppress the $\otimes$ in $A \rtimes G$ whenever the context is clear.

Let $t$ be an indeterminate and $\kappa: V \times V \rightarrow \KK G$ be a bilinear function.   Define an associative $\KK$-algebra, our main algebra of study, $$\hat{\SH}_{\q,\kappa,t} := T(V) \rtimes G[t] / (v_jv_i - q_{ij}v_iv_j-\kappa (v_i,v_j)t, v_i^2).$$  
We may also denote $\kappa(v_i, v_j)=\sum_{g \in G} \kappa_g (v_i, v_j)g$, dividing $\kappa$ into its $g$-components.  One should compare $\hat{\SH}_{\q,\kappa,t}$ of this paper to $\SH_{\q,\kappa,t}$ of \cite{NW}, in which no truncation conditions occur, and $\SH_{\q,\kappa}$ of \cite{LS}, in which $t=1$ and truncation conditions are not included.

Assigning elements in $\KK G$ and $t$ degree 0 and $v_i$ degree 1, $\trunh$ is a filtered algebra.

\begin{defn}
We call $\hat{\SH}_{\q,\kappa,t}$ a truncated quantum Drinfeld Hecke algebra over $\KK[t]$ if it has a Poincar\'{e}-Birkhoff-Witt (PBW) basis over $\KK[t]$.

\end{defn}

In the special case that $t=1$, we call the algebras $\hat{\SH}_{\q,\kappa,1}$ for which there exists a PBW basis a \tqdha\ over $\KK$.  The $t=1$ case will be the main interest of this paper.  The inclusion of the indeterminate is necessary for comparing to conditions on deformations discussed in Sections 3 and 4.  

The goal of this Section is to determine precisely the conditions on $\kappa$ and $\q$ for which $\hat{\SH}_{\q,\kappa,1}$ is a \tqdha\ over $\KK$.  First we will need to develop some notation and definitions.  For each $g \in G$, let $^g v_j = \sum_{i=1}^n g_i^j v_i$ for some scalars $g_i^j \in \KK$.  As in Levandovskyy and Shepler ~\cite[Definition 3.1]{LS}, we make use of the quantum minor determinant.
\begin{defn}
The quantum $(i,j,k,l)$-minor determinant of a group element $g \in G$ is given by $det_{ijkl}(g):=g_k^i g_l^j - q_{ij}g_l^i g_k^j$.
\end{defn}

Let $$\Lambda_{\q}(V):= \KK \langle v_1, v_2, ..., v_n | v_j v_i = q_{ij} v_i v_j \textrm{ and } v_i^2 = 0 \textrm{ for }  i \neq j \in \{1, 2, ..., n\} \rangle$$ be the quantum exterior algebra on $V$.

\begin{lemma}[\cite{NW}, Lemma 4.2]\label{LambdaAut} 
$G$ acts as an automorphism on $\Lambda_{\q}(V)$ if and only if for all $g \in G$, 
\begin{center}
    $det_{srji}(g)=-q_{ij}det_{srij}(g) \textrm{ for all } 1 \leq s,r,i,j \leq n \textrm{ and } s \neq r, \,\, i \neq j,$ \\
    $g_i^rg_j^r(1+q_{ij})=0$ for all $i < j$.
\end{center}
\end{lemma}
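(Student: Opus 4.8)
The plan is to use the universal property of the tensor algebra: any linear action of $g$ on $V$ extends uniquely to a graded algebra endomorphism of $T(V)$, and this descends to $\Lambda_{\q}(V)=T(V)/I$ precisely when $g$ carries the defining ideal $I$ into itself. Since $I$ is generated by the degree-two elements $v_jv_i-q_{ij}v_iv_j$ (for $i<j$) together with $v_i^2$, and $g$ preserves degree, the containment $g(I)\subseteq I$ is equivalent to requiring that the image under $g$ of each such generator lie in $I$, i.e.\ vanish in $\Lambda_{\q}(V)$. Because the conditions are imposed for every $g\in G$ and $G$ is a group, the resulting well-defined endomorphisms $\phi_g$ satisfy $\phi_g\phi_{g^{-1}}=\phi_e=\id$ and are therefore automorphisms; conversely an automorphism is in particular an endomorphism, so it suffices to characterize exactly when each degree-two generator maps into $I$.

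First I would treat the relation $v_i^2$. Writing $^gv_i=\sum_r g_r^i v_r$ and expanding, $^g(v_i^2)=\sum_{r,s}g_r^i g_s^i v_rv_s$. Reducing to the ordered basis $\{v_av_b : a<b\}$ of the degree-two part, using $v_a^2=0$ and $v_bv_a=q_{ab}v_av_b$, the coefficient of $v_av_b$ collects the $(r,s)=(a,b)$ and $(r,s)=(b,a)$ contributions into $g_a^i g_b^i(1+q_{ab})$. Setting every such coefficient to zero yields the second stated condition after relabeling the output indices $a,b$ as $i,j$ and the input index $i$ as $r$.

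Next I would apply $g$ to $v_jv_i-q_{ij}v_iv_j$. Expanding $(^gv_j)(^gv_i)-q_{ij}(^gv_i)(^gv_j)$ and again reducing to the ordered basis, the coefficient of $v_av_b$ becomes $g_a^j g_b^i+q_{ab}g_b^j g_a^i-q_{ij}g_a^i g_b^j-q_{ij}q_{ab}g_b^i g_a^j$. The key step is to recognize this expression, after pairing the repeated scalar products, as $det_{ijba}(g)+q_{ab}\,det_{ijab}(g)$; setting it to zero and relabeling $a,b$ and $i,j$ then gives exactly the first stated condition $det_{srji}(g)=-q_{ij}det_{srij}(g)$. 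I expect this regrouping of the four scalar terms and its matching against the quantum minor determinants to be the main bookkeeping obstacle, together with the care needed to confirm that checking only the degree-two generators suffices, which rests on $I$ being generated in degree two and on $g$ acting as a graded map.
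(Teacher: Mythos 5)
Your proposal is correct and takes essentially the same approach as the paper: apply $g$ to the degree-two defining relations, reduce to the ordered basis of the degree-two component of $\Lambda_{\q}(V)$, and identify the vanishing of the coefficients with the stated quantum minor determinant conditions. The only difference is that you spell out the routine reductions the paper leaves implicit (that checking the degree-two generators suffices, and that the resulting endomorphisms are automorphisms), and your regrouping of the four scalar terms into $det_{ijba}(g)+q_{ab}\,det_{ijab}(g)$ checks out.
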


We include a proof here for completeness.

\begin{proof} The proof is a generalization of \cite[Lemma 3.2]{LS}. 
Consider $${}^g(v_r){}^g(v_s) - q_{sr}{}^g(v_s){}^g(v_r) = \sum_{j,i} \textrm{det}_{srij}(g) v_jv_i$$ where $s \neq r$.  We can rewrite the right hand side as 
\begin{align*}
\sum_{j\leq i} \textrm{det}_{srij}(g) v_j v_i + \sum_{j > i} \textrm{det}_{srij}(g) v_jv_i &=\sum_{j < i} \textrm{det}_{srij}(g) v_jv_i + q_{ij}\sum_{j > i} \textrm{det}_{srij}(g) v_iv_j + \sum_{j} \textrm{det}_{srjj}(g) v_jv_j \\
&=\sum_{j < i} (\textrm{det}_{srij}(g) + q_{ji} \textrm{det}_{srji}(g)) v_jv_i + \sum_{j} \textrm{det}_{srjj}(g) v_jv_j.
\end{align*}

Since $v_jv_j = 0$ in $\Lambda_{\q}(V)$ and $\Lambda_{\q}(V)$ has a $\KK$-basis given by $\{v_1^{\alpha_1}v_2^{\alpha_2} \cdots v_n^{\alpha_n} | \alpha_i \in \{0,1\}\}$, the right hand side vanishes exactly when $$\textrm{det}_{srji}(g)=-q_{ij}\textrm{det}_{srij}(g) \textrm{ for all } 1 \leq s,r,i,j \leq n \textrm{ and } i \neq j, \,\,  s \neq r.$$
Similarly, we consider ${}^g(v_rv_r) - {}^g(v_r){}^g(v_r) = 0 - \sum_{i,j} g_i^rg_j^r v_iv_j$. Simplifying this expression, we get the second condition of the Lemma.
\end{proof} 

\begin{defn}[\cite{LS}, Definition 3.4] 
The parameter $\kappa$ is called a quantum 2-form if, for all $i$ and $j$, $\kappa(v_i,v_i)=0$ and $\kappa(v_j,v_i)=-q_{ij}^{-1}\kappa(v_i, v_j)$.
\end{defn}

\begin{prop}\label{forward}
If $\hat{\SH}_{q,\kappa,1}$ is a \tqdha, then \begin{enumerate}[label=(\roman*)] \item $\kappa$ is a quantum 2-form, 
\item $q_{ij} = q_{ji}^{-1}$ for $i \neq j$, and
\item $G$ acts on $\Lambda_{\q}(V)$ by automorphisms. \end{enumerate} 
\end{prop}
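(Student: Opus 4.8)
The plan is to read off the three conditions directly from the PBW hypothesis. Since $\hat{\SH}_{\q,\kappa,1}$ is a \tqdha, it has a basis of ordered monomials $\{v_1^{\alpha_1}\cdots v_n^{\alpha_n}g : \alpha_i\in\{0,1\},\, g\in G\}$. The point I will exploit repeatedly is that any element of the defining ideal is $0$ in the algebra, so when I expand a consequence of the relations in this basis, the components of different $v$-degree vanish independently; in particular the degree-two part and the degree-zero part (lying in $\KK G$) must each be zero. Each of (i), (ii), (iii) will fall out of a suitable relation after reducing to normal form and separating degrees.

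For condition (ii) and the antisymmetry half of (i), I would play the defining relation for the ordered pair $(i,j)$ against the one for $(j,i)$. Both $v_jv_i - q_{ij}v_iv_j - \kappa(v_i,v_j)$ and $v_iv_j - q_{ji}v_jv_i - \kappa(v_j,v_i)$ vanish in $\hat{\SH}_{\q,\kappa,1}$, and substituting the first into the second gives
\[
(1-q_{ij}q_{ji})\,v_iv_j \;=\; q_{ji}\,\kappa(v_i,v_j)+\kappa(v_j,v_i).
\]
The left side is a scalar multiple of the basis monomial $v_iv_j$ (with $i<j$) while the right side lies in $\KK G$, so by the PBW property both sides are zero. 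This yields $q_{ij}=q_{ji}^{-1}$, which is (ii), and then $\kappa(v_j,v_i)=-q_{ji}\kappa(v_i,v_j)=-q_{ij}^{-1}\kappa(v_i,v_j)$, exactly the antisymmetry demanded of a quantum $2$-form. The remaining requirement $\kappa(v_i,v_i)=0$ I would obtain by combining the $i=j$ instance of the defining relation with the truncation relation $v_i^2=0$: their combination expresses $\kappa(v_i,v_i)$ as an element of the defining ideal, and being of degree zero it must vanish by PBW.

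For condition (iii), I would multiply each defining relation on the left by a group element $g$ and push $g$ past the generators using ${}^g(v_k)\,g = g\,v_k$. Applied to $v_i^2=0$, this gives $\sum_{k,l} g_k^i g_l^i\, v_kv_l = 0$ in $\Lambda_\q(V)$ after stripping the trailing $g$ (legitimate by PBW), and collecting terms yields $g_k^i g_l^i(1+q_{kl})=0$ for $k<l$. Applied to $v_jv_i - q_{ij}v_iv_j - \kappa(v_i,v_j)$ and isolating the degree-two component (the degree-zero part $g\,\kappa(v_i,v_j)$ separates off by PBW), it gives ${}^g(v_j){}^g(v_i) - q_{ij}{}^g(v_i){}^g(v_j)=0$ in $\Lambda_\q(V)$. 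These are precisely the identities appearing in the proof of Lemma \ref{LambdaAut}; equivalently, they say that $G$ preserves the defining relations of $\Lambda_\q(V)$, so $G$ acts by automorphisms, which is (iii).

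The main obstacle is not any single computation but the repeated justification that the PBW basis genuinely lets me separate the degree-two and degree-zero components and cancel the trailing group element; every one of the three conclusions hinges on this independence, which is exactly the content of the \tqdha\ hypothesis. Once that separation is secured, conditions (i) and (ii) are local two-variable calculations and (iii) reduces to recognizing the automorphism criterion of Lemma \ref{LambdaAut}.
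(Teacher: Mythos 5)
Your proposal is correct and follows essentially the same route as the paper: conditions (i) and (ii) come from playing the $(i,j)$ and $(j,i)$ relations (and the truncation $v_i^2=0$) against each other and using PBW linear independence to separate the degree-two and degree-zero components, and (iii) comes from conjugating the relations by group elements and invoking the automorphism criterion of Lemma \ref{LambdaAut}. The only cosmetic difference is that the paper computes the degree-zero part of the conjugated relation explicitly (it reappears as condition (iv) of Theorem \ref{ifftheorem}), whereas you simply note it separates off.
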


\begin{proof}
Suppose $\hat{\SH}_{\q,\kappa,1}$ is a \tqdha.  Then $\hat{\SH}_{\q,\kappa,1}$ has a PBW basis.  Since $v_i^2 = 0 = q_{ii} v_i^2 + \kappa(v_i,v_i)$, we have that $\kappa(v_i,v_i) = 0$.  When $i \neq j$, we have
\begin{align*}
v_jv_i & = q_{ij} v_iv_j + \kappa(v_i,v_j)\\
& = q_{ij}(q_{ji}v_jv_i + \kappa(v_j,v_i)) + \kappa(v_i,v_j)\\
& = q_{ij}q_{ji}v_jv_i + q_{ij}\kappa(v_j,v_i) + \kappa(v_i,v_j).
\end{align*}

Thus we have that $q_{ij} \neq 0$ for all $i \neq j$, as well as $q_{ij} = q_{ji}^{-1}$ for all $i \neq j$.  Furthermore $\kappa(v_j,v_i) = -q_{ij}^{-1}\kappa(v_i,v_j)$.  The two conclusions on the parameters is exactly conditions (i) and (ii).  

Now, for all $h \in G$ and $r \neq s$, we have
\begin{align*}
0 &= h (v_s v_r)h^{-1} - h(q_{rs}v_rv_s + \sum_{g \in G}\kappa_g(v_r,v_s)g)h^{-1}  \\
& = \,^h v_s \,^hv_r - q_{rs} \,^hv_r \,^hv_s - \sum_{g \in G} \kappa_g(v_r,v_s) hgh^{-1} \\
&=\sum_{i,j} (h_i^s h_j^r v_iv_j - q_{rs}h_i^rh_j^s v_iv_j)-\sum_{g \in G} \kappa_g(v_r,v_s) hgh^{-1}\\
& = \sum_{i,j} \textrm{det}_{rsji}(h) v_i v_j - \sum_{g \in G} \kappa_{h^{-1}gh}(v_r,v_s)g\\
& = \sum_{i < j}(\textrm{det}_{rsji}(h) + q_{ij} \textrm{det}_{rsij}(h))v_iv_j + \sum_i \textrm{det}_{rsii}(h) v_i^2 \\
& \hspace{30mm} + \sum_{g \in G}(\sum_{i <j} \textrm{det}_{rsij}(h) \kappa_g(v_i,v_j) - \kappa_{h^{-1}gh}(v_r,v_s))g.
\end{align*}

Since each term needs to be zero, we recover that $\det_{rsij}(g)=-q_{ji}\det_{rsji}(g)$ for $r \neq s$ and $i \neq j$.  Similarly we consider $0 = h(v_rv_r)h^{-1} - h(0)h^{-1}$ to recover $(1+q_{ij})(h_i^rh_j^r) = 0$ for all $i < j$. Therefore, by Lemma ~\ref{LambdaAut}, $G$ acts on $\Lambda_{\q}(V)$ by automorphisms.
\end{proof}

This result agrees with \cite[Proposition 3.5]{LS} except that conditions (ii) and (iii) are modified by the added truncation.  Notice, in particular, that we get no conditions on the diagonal terms $q_{ii}$.  Had we considered truncation at larger powers or no truncation on $v_i$, we would have required $q_{ii}=1$.

With all of this structure, we wish to determine the necessary and sufficient criteria for which $\hat{\SH}_{\q,\kappa,1}$ will be a \tqdha.  Recall that this means determining the conditions for which $\hat{\SH}_{q,\kappa,1}$ is a PBW algebra.  According to Bergman's Diamond Lemma \cite{Bergman}, it is enough to check a minimal set of words in an algebra to confirm that the algebra is PBW.  While Bergman says that the main results of his paper are trivial, but far from clear explicitly, it is the crux of our argument.  For completeness, we include the statement of Bergman's Diamond Lemma here.

\begin{thm}[\cite{Bergman}, Theorem 1.2]
Let $S$ be a reduction system for a free associative algebra $\KK \langle X \rangle$, and $\leq$ a semigroup partial ordering on $\langle X \rangle$, compatible with $S$, and having descending chain condition.  All ambiguities of $S$ are resolvable relative to $\leq$ if and only if all elements of $k \langle X \rangle $ are reduction-unique under $S$.
\end{thm}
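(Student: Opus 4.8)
The plan is to prove the two implications separately, with essentially all of the work concentrated in the direction ``all ambiguities resolvable $\Rightarrow$ every element reduction-unique.'' Before starting I would fix vocabulary: call a word \emph{reducible} if it contains some $W_\sigma$ as a subword, let $r_{A\sigma B}$ denote the $\KK$-linear reduction sending $A W_\sigma B \mapsto A f_\sigma B$ and fixing every other basis word, call an element \emph{reduction-finite} if every sequence of reductions applied to it is eventually trivial, and \emph{reduction-unique} if it is reduction-finite and all of its maximal reduction sequences terminate at the same irreducible element, denoted $r_S(a)$. The first thing to record is that compatibility of $\leq$ with $S$ together with the descending chain condition forces every element to be reduction-finite: each nontrivial reduction replaces a basis word $W$ by a $\KK$-combination of words strictly below $W$, so by a multiset argument an infinite reduction sequence would produce an infinite strictly descending chain, contradicting the DCC.

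For the easy direction, suppose every element is reduction-unique. Given an overlap ambiguity $W_\sigma = AB$, $W_\tau = BC$ with $B \neq 1$, the two reductions of the word $ABC$ produce $f_\sigma C$ and $A f_\tau$; since both arise by reducing the reduction-unique element $ABC$, they share the normal form $r_S(ABC)$ and hence reduce to a common value, so the ambiguity is resolvable. Inclusion ambiguities $W_\tau = A W_\sigma C$ are handled identically. Thus reduction-uniqueness yields resolvability with no further hypotheses.

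The substance is the reverse direction. I would first establish that the set of reduction-unique elements is a $\KK$-submodule on which $r_S$ is $\KK$-linear, and that reduction-uniqueness is stable under left and right multiplication by words, using that $\leq$ is a semigroup order so that reductions performed inside a fixed context $A(\cdot)C$ respect the order. The main claim is then proved by Noetherian induction on $\langle X\rangle$ along $\leq$ (well-founded by the DCC): assuming every word strictly below $W$ is reduction-unique, I show $W$ is. If $W$ is irreducible this is immediate; otherwise it suffices, by a Newman-lemma-style argument adapted to the linear setting, to verify that for any two single reductions $\rho_1,\rho_2$ of $W$ the elements $\rho_1(W)$ and $\rho_2(W)$ are reduction-unique with $r_S(\rho_1(W)) = r_S(\rho_2(W))$. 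I split into the three ways two occurrences of $W_\sigma$, $W_\tau$ can sit inside $W$: disjoint, overlapping, and nested. In the disjoint case the two reductions act on separated regions and literally commute, so completing each side gives the same combination of words $< W$, reduction-unique by induction. In the overlapping and nested cases the configuration is, after stripping the surrounding context $A(\cdot)C$, precisely an overlap or inclusion ambiguity; resolvability gives a common reduction of the bare ambiguity, and multiplying back by the context together with the inductive hypothesis yields $r_S(\rho_1(W)) = r_S(\rho_2(W))$.

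The hard part will be the bookkeeping that converts pairwise (``local'') confluence into genuine reduction-uniqueness in the presence of the linear structure: a single reduction of $W$ produces not another word but a $\KK$-combination $A f_\sigma C$ of several words below $W$, so I must arrange that $r_S$ is well-defined and additive on the relevant submodule \emph{before} it is invoked, and that reducing $\rho_1(W)$ and $\rho_2(W)$ to a common value is meaningful term-by-term. Verifying that resolvability of an ambiguity in isolation propagates to the same ambiguity embedded in an arbitrary context $A(\cdot)C$, and that no infinite regress occurs when repeatedly reducing the resulting combinations, is exactly where the semigroup-order hypothesis and the descending chain condition must be deployed with care; this is the point Bergman characterizes as trivial but far from clear.
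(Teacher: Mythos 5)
The paper does not actually prove this statement: it is quoted verbatim as \cite[Theorem 1.2]{Bergman} and used as a black box, so there is no in-paper proof to compare against. Measured against Bergman's own argument, your outline follows the standard route --- reduction-finiteness from compatibility plus the descending chain condition via a well-founded multiset order, the lemma that reduction-unique elements form a $\KK$-submodule on which $r_S$ is linear, the trivial direction from uniqueness of normal forms, and Noetherian induction along $\leq$ with the three-case (disjoint/overlapping/nested) local-confluence analysis. That is the right skeleton, and the disjoint case and the easy direction are handled correctly.

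The one concrete mismatch is that the theorem is stated for ambiguities \emph{resolvable relative to} $\leq$, while your sketch works throughout with plain resolvability (existence of a common reduction of $f_\sigma C$ and $A f_\tau$). In the easy direction this only costs you the standard remark that resolvable implies resolvable relative to $\leq$ when $\leq$ is compatible with $S$. But in the hard direction the gap is real: resolvability relative to $\leq$ does \emph{not} hand you a ``common reduction of the bare ambiguity'' to multiply back into the context; it only tells you that $f_\sigma C - A f_\tau$ lies in the submodule spanned by elements $D(W_\rho - f_\rho)E$ with $D W_\rho E < ABC$. The overlap/inclusion case of your induction must therefore be rephrased: embed in the context, observe that each $LD(W_\rho - f_\rho)ER$ involves only words strictly below $W$, invoke the inductive hypothesis together with your submodule lemma to conclude $r_S$ is defined and linear there and kills each such element, and deduce $r_S(\rho_1(W)) = r_S(\rho_2(W))$. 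This is exactly the ``bookkeeping'' you defer in your last paragraph, but as written your plan invokes a hypothesis strictly stronger than the one in the statement, so the reformulation is needed rather than optional.
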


In this setting $X=\{v_1, \cdots , v_n \} \cup  G$ and $S$ are the relations on $\hat{\SH}_{q,\kappa,1}$.  This result implies we must check the following ambiguities: $v_kv_jv_i$ for $k > j > i$, $v_jv_i^{2}$ for $j > i$ and $v_i^{2} =0$, $v_j^{2}v_i$ for $j > i$ and $v_j^{2}=0$, $gv_i^{2}$ for $v_i^{2} = 0$ and $g \in G$, $gv_jv_i$ for $j>i$ and $g \in G$ as these are the monomials with more than one reduction using the relations.  Bergman's Diamond Lemma says it is enough to check that the above monomials are reduction-unique.  

\begin{thm}\label{ifftheorem}
The factor algebra $\hat{\SH}_{\q,\kappa,1}$ is a \tqdha\ if and only if \begin{enumerate}[label=(\roman*)]
\item $G$ acts on $\Lambda_{\q}(V)$ by automorphisms and $q_{ij} = q_{ji}^{-1}$ for $i \neq j$,
\item $\kappa$ is a quantum 2-form,
\item For all $h$ in $G$ and $1 \leq i < j < k \leq n$, 
$$0 = (q_{ik}q_{jk}{}^hv_k - v_k) \kappa_h(v_i,v_j) + (q_{jk}v_j - q_{ij}{}^hv_j) \kappa_h(v_i,v_k) + ({}^hv_i - q_{ij}q_{ik}v_i) \kappa_h(v_j,v_k),$$
\item For all $g,h$ in $G$ and all $1 \leq r < s \leq n$,
$$\kappa_{h^{-1}gh}(v_r,v_s) = \sum_{i < j} \textrm{det}_{rsij}(h) \kappa_g(v_i,v_j),$$
\item For all $h$ in $G$ and $1 \leq i < j \leq n$,
$$0 = q_{ij}v_i\kappa_h(v_i,v_j) + {}^hv_i\kappa_h(v_i,v_j),$$
and 
$$0 = q_{ij}{}^hv_j \kappa_h(v_i,v_j) + v_j \kappa_h(v_i,v_j), \textrm{ and}$$ 
\item For all $g, h \in G$ and $i <j$
$$\sum_{i<j} (g_i^rg_j^r) \kappa_h(v_i,v_j) = 0.$$ 
\end{enumerate}
\end{thm}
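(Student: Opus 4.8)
The plan is to apply Bergman's Diamond Lemma to the reduction system $S$ determined by the defining relations of $\hat{\SH}_{\q,\kappa,1}$: the straightening rule $v_jv_i \to q_{ij}v_iv_j + \kappa(v_i,v_j)$ for $j>i$, the truncation $v_i^2 \to 0$, and the semidirect-product rules $gv_i \to \sum_k g_k^i v_k g$. I would order the free monoid on $X = \{v_1,\dots,v_n\}\cup G$ by a semigroup partial order refining word length and broken lexicographically, with each group generator larger than every $v_i$ and $v_n > \cdots > v_1$; this has the descending chain condition and is compatible with $S$, since each rule sends its leading word to a combination of strictly smaller words. Bergman's Diamond Lemma then says $\hat{\SH}_{\q,\kappa,1}$ has a PBW basis — equivalently is a \tqdha\ — if and only if every overlap ambiguity of $S$ resolves. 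Because the resolution of each ambiguity is an explicit identity whose vanishing is one of the stated conditions, computing the five ambiguities listed before the statement proves both implications at once.

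The conditions (i) and (ii) are forced by Proposition \ref{forward}, and I would note that they are also exactly what makes the rewriting rules mutually consistent, so they may be assumed throughout the analysis of the remaining ambiguities. The degree-one truncation overlaps $v_jv_i^2$ and $v_j^2v_i$ are the quickest: reducing the square first annihilates the word, while reducing the interior $v_jv_i$ first, straightening the resulting $v_i\kappa(v_i,v_j)$ (resp.\ $v_j\kappa(v_i,v_j)$) past the group elements via $hv_m = {}^hv_m\,h$, and killing the newly created square, leaves $\sum_h \kappa_h(v_i,v_j)(q_{ij}v_i + {}^hv_i)h$ (resp.\ $\sum_h \kappa_h(v_i,v_j)(q_{ij}{}^hv_j + v_j)h$); equating with $0$ termwise in $h$ gives precisely the two equations of condition (v).

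For the group overlaps: in $gv_i^2$, reducing $v_i^2$ gives $0$, whereas straightening $g$ to the right twice produces $\sum_{k,l}g_k^i g_l^i v_kv_l\,g$; ordering the quadratic part and invoking Lemma \ref{LambdaAut} kills it via $g_a^i g_b^i(1+q_{ab}) = 0$, so that only the $\kappa$-contributions survive, and their vanishing is condition (vi). For $gv_jv_i$ with $j>i$, reducing $v_jv_i$ first and then straightening $g$, versus straightening $g$ first and then ordering the image of $v_jv_i$, must agree; collecting the coefficient of each group element and each ordered monomial and using the quantum minor determinant identities isolates the equivariance relation $\kappa_{h^{-1}gh}(v_r,v_s) = \sum_{i<j}\textrm{det}_{rsij}(h)\kappa_g(v_i,v_j)$, which is condition (iv) — the same identity already encountered in the computation proving Proposition \ref{forward}.

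The main obstacle is the cubic overlap $v_kv_jv_i$ for $k>j>i$, which yields condition (iii). Reducing the left pair $v_kv_j$ first and the right pair $v_jv_i$ first each launches a cascade of rewrites; because the three indices are distinct no squares are created, so every intermediate term is a quantum-ordered monomial possibly trailed by a $\kappa$-factor that must then be transported past the group elements via $hv_m = {}^hv_m\,h$. After using the quantum $2$-form relations and $q_{ij} = q_{ji}^{-1}$ to cancel the purely quadratic contributions, the two expressions agree precisely when condition (iii) holds for every $h$. The delicate part is the bookkeeping — tracking which $q$-factors accumulate as each $\kappa$-term is moved to the right, and matching the coefficients of ${}^hv_m$ against those of $v_m$ — and it reproduces the cubic condition of \cite[Theorem 7.6]{LS} in the present exterior setting.
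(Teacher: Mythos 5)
Your proposal is correct and follows essentially the same route as the paper: both invoke Bergman's Diamond Lemma on the same reduction system, check the same five overlap ambiguities ($v_kv_jv_i$, $v_jv_i^2$, $v_j^2v_i$, $gv_i^2$, $gv_jv_i$), and match each to conditions (iii)--(vi) with (i)--(ii) supplied by Proposition \ref{forward}. The only difference is presentational — you make the monomial order and the ambiguity-resolution framing explicit, while the paper writes the same computations as associativity checks.
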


\begin{proof}
Suppose factor algebra $\hat{\SH}_{\q,\kappa,1}$ is a \tqdha.  Then by Proposition ~\ref{forward} we have that (i) and (ii) are satisfied.  Since $\hat{\SH}_{\q,\kappa,1}$ is an associative algebra, we know that for all $1 \leq i<j<k \leq n$, $0 = v_k(v_jv_i) - (v_kv_j)v_i$.  We reorder the $v_i, v_j, v_k$ in ascending order to recover condition (iii).\begin{align*} 0 =& v_k(v_jv_i) - (v_kv_j)v_i  \\
     =& q_{ij}v_kv_iv_j + v_k \kappa(v_i,v_j) - q_{jk}v_jv_kv_i - \kappa(v_j,v_k)v_i \\
     =& q_{ij}q_{ik}v_iv_kv_j + q_{ij}\kappa(v_i,v_k)v_j + v_k \kappa(v_i,v_j)  - q_{jk}q_{ik}v_jv_iv_k - q_{jk}v_j\kappa(v_i,v_k) -  \kappa(v_j,v_k)v_i \\
     =& q_{ij}q_{ik}v_i(q_{jk}v_jv_k + \kappa(v_j,v_k)) + q_{ij} \kappa(v_i,v_k)v_j +v_k \kappa(v_i,v_j) \\
     &- q_{jk}q_{ik}(q_{ij}v_iv_j + \kappa(v_i,v_j))v_k - q_{jk}v_j\kappa(v_i,v_k) - \kappa(v_j,v_k)v_i \\
     =& q_{ij}q_{ik}q_{jk}v_iv_jv_k + q_{ij}q_{ik}v_i \kappa(v_j,v_k)+ \sum_{g \in G}q_{ij}  \,^gv_j \kappa_g(v_i,v_k)g +  v_k \kappa(v_i,v_j) -q_{jk}q_{ik}q_{ij}v_iv_jv_k \\&-\sum_{g \in G} q_{jk}q_{ik}\,^gv_k \kappa_g(v_i,v_j)g - q_{jk}v_j\kappa(v_i,v_k) - \sum_{g \in G} \,^gv_i \kappa_g(v_j,v_k)g. \end{align*}  

\noindent By rearranging terms, we have that for all $g \in G$,
$0 =   (q_{ij}q_{ik}v_i-\,^gv_i )\kappa_g(v_j,v_k)+ (q_{ij}  \,^gv_j -q_{jk}v_j) \kappa_g(v_i,v_k) +  (v_k- q_{jk}q_{ik}\,^gv_k) \kappa_g(v_i,v_j) $ which is equivalent to condition (iii).  Condition (iv) is the result of considering $0 = h(v_sv_r)h^{-1} - h(q_{rs}v_rv_s+\kappa(v_r,v_s))h^{-1}$. 
If we consider
\begin{align*} 0&= v_j(v_i^2) - (v_jv_i)v_i \\
    & = -q_{ij}v_iv_jv_i - \kappa(v_i,v_j)v_i\\
& = -q_{ij}v_i (q_{ij}v_iv_j + \kappa(v_i,v_j)) - \kappa(v_i,v_j)v_i \\
& = - q_{ij}v_i\kappa(v_i,v_j) - \kappa(v_i,v_j)v_i,
\end{align*} then for all $g \in G$ and all $i < j$, we need $(-^gv_i - q_{ij}v_i)\kappa_g(v_i,v_j) = 0$.  This is the first part of condition (v).  Similarly, $0= (v_j^2)v_i - v_j(v_jv_i)$ gives us the second part of condition (v).  

Finally, we consider 
\begin{align*}
     0 & = g(v_r^2) - (gv_r)v_r  \\
     & = \sum_i g_i^rv_ig v_r\\
     & = (\sum_i g_i^r v_i)( \sum_j g_j^r v_j) g \\
     & = (\sum_{i<j} g_i^rg_j^r v_iv_j 
     + \sum_{j< i}  g_i^rg_j^r v_iv_j) g\\
     & = (\sum_{i<j} g_i^rg_j^r v_iv_j + \sum_{i<j} q_{ij}g_j^r g_i^rv_iv_j + g_j^rg_i^r \kappa(v_ik,v_j) ) g\\
     & = \sum_{i<j}(g_i^rg_j^r + q_{ij}g_j^rg_i^r)v_iv_jg + g_j^rg_i^r \kappa(v_i,v_j)g
\end{align*}
which gives condition (vi).

We have checked all of the monomials required by Bergman's Diamond Lemma to see if they are reduction-unique.  Therefore the factor algebra $\hat{\SH}_{\q,\kappa,1}$ is a truncated quantum Drinfeld Hecke algebra if and only if conditions (i)-(vi) are met.

\end{proof}

We get comparable conditions on $\hat{\SH}_{\q,\kappa,1}$ being a \tqdha\ to the non-truncated case \cite[Theorem 7.6]{LS}.  As it compares to the conditions in \cite{LS}, condition (i) is analogous, conditions (ii)-(iv) are identical, and the added conditions (v)-(vi) correspond to the added truncation.  

We end this Section by providing some insight into the conditions for which $\kappa$ will lead to a truncated quantum Drinfeld Hecke algebra for a fixed set of quantum scalars.  The parameter $\kappa$ defines a linear transformation $\kappa : V \otimes V \rightarrow \KK G$. 
The set of all parameters $\Hom_{\KK}(V \otimes V , \, \KK G)$
is a $\KK$-vector space.  We are interested in the subset for which $\kappa$ defines a truncated quantum Drinfeld Hecke algebra.

\begin{defn}
A parameter $\kappa$ is admissible if it defines a \tqdha \, $\hat{\SH}_{\q, \kappa, t}$.
\end{defn}

That is, $\kappa$ is admissible if $\hat{\SH}_{\q, \kappa, t}$ satisfies the conditions in Theorem ~\ref{ifftheorem}. See \cite{Drinfeld} for the definition in the setting of affine Hecke algebras.  
We call the set $$P_G = \{ \kappa \in \Hom_{\KK}(V \otimes V , \, \KK G)| ~ \kappa \textrm{ is admissible}\} \subset \Hom_{\KK}(V \otimes V , \, \KK G)$$ the parameter space. As $P_G$ is closed under addition and scalar multiplication, $P_G$ is a subspace of $\Hom_{\KK}(V \otimes V , \, \KK G)$. We use the dimension of $P_G$ over $\KK$ to characterize the \tqdha s that arise from specific finite groups with fixed system $\q$ of quantum parameters.

\begin{prop}\label{diagsupport}
In a \tqdha , only group elements that act diagonally on the vector space can support the parameter space. 
\end{prop}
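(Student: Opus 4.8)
The plan is to prove necessity directly: assuming a group element $g$ lies in the support of some admissible $\kappa$, i.e.\ $\kappa_g \neq 0$, I will show that $g$ acts diagonally, meaning every basis vector $v_k$ is an eigenvector of $g$. Since $\kappa_g \neq 0$, fix a pair $i < j$ with $\kappa_g(v_i, v_j) \neq 0$. Throughout I use that every $q_{ij} \in \KK^*$, so the scalars occurring below may be inverted.

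First I would read off the eigenvector data contained in condition (v) of Theorem \ref{ifftheorem}. Dividing each of its two relations by the nonzero scalar $\kappa_g(v_i, v_j)$ yields ${}^g v_i = -q_{ij} v_i$ and ${}^g v_j = -q_{ij}^{-1} v_j$, so $v_i$ and $v_j$ are eigenvectors of $g$. The same reasoning applies to any index $k$ that appears in some pair on which $\kappa_g$ is nonzero: condition (v) immediately forces $v_k$ to be an eigenvector. Hence it only remains to handle indices $k \notin \{i,j\}$ that occur in no nonzero pair of $\kappa_g$.

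For such a $k$ I would invoke condition (iii) applied to the triple obtained by inserting $k$ into $\{i,j\}$ and reordering it as $a < b < c$. By the standing assumption on $k$, the two summands of (iii) whose $\kappa_g$-factor is attached to a pair containing $k$ vanish, leaving a single surviving summand whose $\kappa_g$-factor is $\kappa_g(v_i, v_j) \neq 0$. According to whether $k$ equals $c$, $b$, or $a$, the surviving bracket is $q_{ac}q_{bc}{}^g v_c - v_c$, $q_{bc} v_b - q_{ab}{}^g v_b$, or ${}^g v_a - q_{ab}q_{ac} v_a$; setting it to zero and dividing by the nonzero $q$'s expresses ${}^g v_k$ as a scalar multiple of $v_k$. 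Thus $v_k$ is again an eigenvector, and combining the two cases shows that every $v_k$ is an eigenvector, so $g$ is diagonal.

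The step I expect to be the main obstacle is precisely this last case analysis for condition (iii): one must check, for each of the three positions of $k$ inside the ordered triple $a<b<c$, that exactly the two $\kappa_g$-factors carried by pairs containing $k$ drop out, so that the remaining relation is a nonzero multiple of an expression of the form $(\textrm{scalar})\,v_k - {}^g v_k$. The cases $n \le 2$ need no appeal to (iii): there the unique pair already forces both basis vectors to be eigenvectors via condition (v), so $g$ is diagonal outright.
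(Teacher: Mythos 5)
Your proof is correct and follows essentially the same route as the paper's, which likewise deduces diagonality from conditions (iii) and (v) of Theorem \ref{ifftheorem} (yielding the eigenvalues $g_i^i=-q_{ij}$, $g_j^j=-q_{ji}$, and $g_k^k=q_{ki}q_{kj}$ for $k\neq i,j$). Your write-up simply makes explicit the case analysis the paper leaves implicit, in particular first disposing via (v) of every index occurring in a nonzero pair so that exactly one summand of (iii) survives for the remaining indices.
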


\begin{proof}
Suppose $\trunh$ is a \tqdha \, and $\kappa_g(v_i,v_j) \neq 0$.  Then condition (iii) and condition (v) from Theorem ~\ref{ifftheorem} imply that $g$ is a diagonal action where $g_i^i = -q_{ij}$, $g_j^j = -q_{ji}$, and $g_k^k = q_{ki}q_{kj}$ for all $ k \neq i,j$.
\end{proof}

\begin{cor}\label{diagelement}
The dimension of the parameter space of a \tqdha \, is bounded by $\binom{n}{2}$ where $n$ is the dimension of the vector space. 
\end{cor}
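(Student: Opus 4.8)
The plan is to use Proposition \ref{diagsupport} to show that an admissible parameter $\kappa$ carries at most $\binom{n}{2}$ independent scalar degrees of freedom, by exhibiting an injective linear map from the parameter space $P_G$ into $\KK^{\binom{n}{2}}$. First I would observe that, because any admissible $\kappa$ must be a quantum 2-form (condition (ii) of Theorem \ref{ifftheorem}), it is completely determined by its values $\kappa(v_i,v_j)$ on the ordered pairs $1 \le i < j \le n$: the diagonal values $\kappa(v_i,v_i)$ vanish, and the values with $i>j$ are recovered via $\kappa(v_j,v_i) = -q_{ij}^{-1}\kappa(v_i,v_j)$. There are exactly $\binom{n}{2}$ such pairs.

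Next, for each fixed pair $i<j$, I would apply Proposition \ref{diagsupport}: if $\kappa_g(v_i,v_j) \ne 0$, then $g$ must be the specific diagonal element determined by $g_i^i = -q_{ij}$, $g_j^j = -q_{ji}$, and $g_k^k = q_{ki}q_{kj}$ for all $k \ne i,j$. This element is uniquely pinned down by the pair $(i,j)$ together with the fixed system $\q$ of quantum parameters; call it $g_{ij}$. Consequently the component $\kappa(v_i,v_j) = \sum_{g} \kappa_g(v_i,v_j)\,g \in \KK G$ can be supported only on $g_{ij}$, so $\kappa(v_i,v_j) = c_{ij}\,g_{ij}$ for a single scalar $c_{ij} \in \KK$ (with $c_{ij} = 0$ forced when $g_{ij} \notin G$).

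Finally, I would assemble these observations into the linear map $P_G \to \KK^{\binom{n}{2}}$, $\kappa \mapsto (c_{ij})_{i<j}$. This map is visibly linear and is injective: if every $c_{ij}$ vanishes, then $\kappa(v_i,v_j) = 0$ for all $i<j$, whence by the first step $\kappa$ is identically zero. An injective $\KK$-linear map forces $\dim_{\KK} P_G \le \binom{n}{2}$, which is the claimed bound.

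I do not anticipate a genuine obstacle here, since the corollary is essentially a dimension count packaged from Proposition \ref{diagsupport}. The only point requiring care is verifying that the single-group-element support collapses each $\KK G$-valued component $\kappa(v_i,v_j)$ to a one-dimensional space rather than a $|G|$-dimensional one, and that two distinct pairs which happen to share the same supporting element $g_{ij}$ still contribute as separate coordinates of the target $\KK^{\binom{n}{2}}$, so that the count is governed by the number of pairs and not by $|G|$.
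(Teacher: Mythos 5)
Your proof is correct and follows exactly the route the paper intends: the corollary is stated without proof as an immediate consequence of Proposition \ref{diagsupport}, whose proof shows that the supporting group element for each pair $(i,j)$ is uniquely pinned down by $\q$, so each of the $\binom{n}{2}$ pairs contributes at most one scalar degree of freedom. Your careful spelling-out of the injective linear map $P_G \to \KK^{\binom{n}{2}}$ is a faithful elaboration of that argument.
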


Example ~\ref{fulldimex} gives an example where the bound is met.


\section{Truncated \qdha s\ as deformations}

Truncated quantum Drinfeld Hecke algebras are deformations of the algebra $\Lambda_{\q}(V) \rtimes G$.  In this Section, we give the conditions for which the algebraic description of truncated quantum Drinfeld Hecke algebras coincide with the notion of a formal deformation.

\begin{defn} 
Let $A$ be an algebra over $\KK$.  A deformation of $A$ over $\KK[t]$ is an associative $\KK[t]$ algebra with a deformed multiplication  determined by bilinear maps $\mu_i: A \otimes A \rightarrow A$ where $$a * b=ab +\mu_1(a \otimes b)t+\mu_2( a \otimes b)t^2 + ... + \mu_j(a \otimes b)t^k$$ for all $a, b \in A$ and $ab$ is multiplication in $A.$
\end{defn}

Naidu and Witherspoon showed that quantum Drinfeld Hecke algebras over $\CC[t]$ are deformations of $S_{\q'}(V)$ over $\CC[t]$ with $\deg \mu_i=-2i$ for all $i>0$ in \cite[Theorem 2.2]{NW} (see Section 5.3 for the definition of $S_{\q'}(V)$).  Using a generalization of their proof, which was a rephrasing of the proof of \cite[Theorem 3.2]{W}, we recover a similar result here but with the slight modification that the deformed multiplication must respect the truncation on $\Lambda_{\q}(V)$.

\begin{thm} \label{mu}
The \tqdha s over $\KK[t]$ are the deformations of $\Lambda_{\q}(V) \rtimes G$ over $\KK[t]$ with $\deg \mu_i = -2i$ and $\mu_i(v_j,v_j) = 0$ for all $j \in \{1,2,...,n\}$ and $i>0$.
\end{thm}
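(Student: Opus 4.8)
The plan is to establish a two-way correspondence between the PBW condition defining a \tqdha\ and the formal deformation condition, exactly paralleling the Naidu--Witherspoon argument in \cite[Theorem 2.2]{NW} but carrying along the extra relation $v_j^2 = 0$. First I would set up the bookkeeping: the algebra $\Lambda_{\q}(V) \rtimes G$ is graded with $v_i$ in degree $1$ and $G$ in degree $0$, and $\hat{\SH}_{\q,\kappa,t}$ is filtered. The defining relations $v_j v_i - q_{ij} v_i v_j - \kappa(v_i,v_j)t$ and $v_i^2$ show that the deformed product $*$ lowers degree by $2$ on the $t$-linear term (since $\kappa(v_i,v_j) \in \KK G$ has degree $0$ while $v_j v_i$ has degree $2$), which forces $\deg \mu_i = -2i$ once one checks associativity degree by degree; this is the source of the $\deg \mu_i = -2i$ constraint. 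The relation $v_i^2 = 0$ must be preserved in the deformation, and since $v_i * v_i = v_i^2 + \mu_1(v_i,v_i)t + \cdots = \mu_1(v_i,v_i)t + \cdots$, demanding that $v_i^2$ remain zero after deformation is precisely the condition $\mu_i(v_j,v_j) = 0$ for all $i > 0$.

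The forward direction is to show a \tqdha\ gives such a deformation. Here I would argue that the PBW property provides a $\KK[t]$-module isomorphism $\hat{\SH}_{\q,\kappa,t} \cong (\Lambda_{\q}(V) \rtimes G)[t]$ as graded/filtered modules, and transport the multiplication of $\hat{\SH}_{\q,\kappa,t}$ across this isomorphism to define the $\mu_i$. Associativity of the deformed product is inherited from associativity of $\hat{\SH}_{\q,\kappa,t}$, which in turn is guaranteed by Theorem \ref{ifftheorem}. The degree computation above pins down $\deg\mu_i = -2i$, and the fact that the relation $v_i^2 = 0$ survives identically (there is no $t$-correction to $v_i^2$ in the defining ideal) gives $\mu_i(v_j,v_j) = 0$. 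The reverse direction reverses this: given a deformation with the stated degree and diagonal-vanishing conditions, I would read off $\kappa$ from the degree-$(-2)$ component $\mu_1$ restricted to $V \otimes V$, set $\kappa(v_i,v_j) = \mu_1(v_i \otimes v_j) - \mu_1(v_j \otimes v_i)$ suitably normalized against the quantum relation, and verify that associativity of the deformation translates into conditions (i)--(vi) of Theorem \ref{ifftheorem}, so that the resulting $\hat{\SH}_{\q,\kappa,t}$ is a \tqdha.

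The technical heart of both directions is the standard fact that a filtered algebra whose associated graded algebra equals the expected PBW-type algebra is itself a deformation, together with the homogeneity argument that splits the associativity constraint into graded pieces. I expect the main obstacle to be the careful tracking of the truncation relation through the degree argument: one must check that imposing $v_i^2 = 0$ in the deformed algebra is genuinely equivalent to $\mu_i(v_i,v_i) = 0$ for every $i > 0$, rather than merely for $\mu_1$, since a priori higher $\mu_i$ could reintroduce a nonzero $v_i * v_i$. This is resolved by an induction on $i$: assuming $\mu_1(v_i,v_i) = \cdots = \mu_{i-1}(v_i,v_i) = 0$, the degree constraint $\deg\mu_i = -2i$ forces $\mu_i(v_i,v_i)$ to live in the degree-$(2 - 2i)$ part of $\Lambda_{\q}(V) \rtimes G$, and the PBW structure over $\KK[t]$ forces this contribution to vanish. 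Once this inductive step is in place, the remaining verification that the associativity equations match conditions (iii)--(vi) is routine and follows the template of \cite[Theorem 3.2]{W}, so I would reference that proof rather than reproduce the computation in full.
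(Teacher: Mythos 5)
Your forward direction follows the paper's: expand $v \ast w$ using the defining relations, observe that each application of a relation drops the polynomial degree by $2$ (giving $\deg\mu_i=-2i$), and specialize to $v=w=v_k$ to get $\mu_i(v_k,v_k)=0$. One small correction there: for $i\geq 2$ the condition $\mu_i(v_j,v_j)=0$ is automatic, since $\deg\mu_i(v_j,v_j)=2-2i<0$; the only real content is $\mu_1(v_j,v_j)=0$, which follows because $\trunh$ is free over $\KK[t]$, so the ``induction on $i$'' you flag as the main obstacle is not actually needed.

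The reverse direction is where your proposal has a genuine gap. You extract $\kappa$ from $\mu_1$ and propose to check that associativity of the deformation yields conditions (i)--(vi) of Theorem \ref{ifftheorem}, concluding that $\hat{\SH}_{\q,\kappa,t}$ is a \tqdha. But the theorem asserts that the deformation $D$ \emph{is} a \tqdha, i.e.\ that $D$ is isomorphic as a $\KK[t]$-algebra to $\hat{\SH}_{\q,\kappa,t}$; showing that the extracted $\kappa$ happens to define a PBW algebra does not identify $D$ with that algebra. Two things are missing. First, you must show that $V$ and $\KK G$ generate $D$ under $\ast$ --- the paper proves surjectivity of the map $\phi\colon T(V)\rtimes G[t]\to D$ by strong induction on degree, using $\deg\mu_i=-2i$ to ensure the correction terms $\mu_j(v_{i_1}, v_{i_2}\cdots v_{i_m}g)t^j$ have strictly smaller degree and hence lie in the image. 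Second, you must show that $\ker\phi$ is exactly the ideal $I[t]$ generated by $v_i^2$ and $v_jv_i-q_{ij}v_iv_j-(\mu_1(v_j,v_i)-q_{ij}\mu_1(v_i,v_j))t$, not something larger; the paper gets the containment $I[t]\subseteq\ker\phi$ by direct computation and the reverse containment by comparing dimensions of $D$ and $T(V)\rtimes G[t]/I[t]$ in each degree. Without these two steps the correspondence between deformations and \tqdha s is not established. Your route could be repaired by adding them, at which point the verification of (i)--(vi) becomes unnecessary: once $D\cong T(V)\rtimes G[t]/I[t]$ and $D\cong\Lambda_{\q}(V)\rtimes G[t]$ as a $\KK[t]$-module, the PBW property is immediate.
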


\begin{proof}
Assume $\trunh$ is a \tqdha\, over $\KK[t]$.  Then, by \cite[Section 4]{ShepWithPBW}, the associated graded algebra of $\trunh$ is isomorphic to $\Lambda_{\q}(V) \rtimes G[t]$.  We want to show that $\trunh$ is a formal deformation of $\Lambda_{\q}(V) \rtimes G$ which meets the conditions on $\mu_i$.

Note that $\Lambda_{\q}(V) \rtimes G$ has a $\KK$-basis given by $\{v_1^{\alpha_1}v_2^{\alpha_2} \cdots v_n^{\alpha_n}g | \alpha_i \in \{0,1\}\}$.  Thus, because $\trunh$ is a truncated quantum Drinfeld Hecke algebra, all elements in $\trunh$ are a $\KK[t]$-linear combination of terms of the form $v_1^{\alpha_1}v_2^{\alpha_2} \cdots v_n^{\alpha_n}g$ for $\alpha_i \in \{0,1\}$ and $g \in G$. 
 Consider two basis elements $v = v_1^{\alpha_1} \cdots v_n^{\alpha_n}g, w = v_1^{\beta_1} \cdots v_n^{\beta_n}h \in \trunh$.  Denote their product in $\trunh$ by $v \ast w$.  Thus $v \ast w = v_1^{\alpha_1} \cdots v_n^{\alpha_n} {}^g (v_1^{\beta_1} \cdots v_n^{\beta_n}) gh$. 
We obtain
\begin{align*}
v \ast w & = vw + \mu_1(v \otimes w)t + \mu_2(v \otimes w) t^2 + \cdots + \mu_k (v \otimes w) t^k\end{align*}
using defining relations on $\trunh$ to reorder the $v_i$ terms in ascending order.  That is, $vw$ is a linear combination of terms of the form $v_1^{\alpha_1} v_2^{\alpha_2} \cdots v_n^{\alpha_n}gh$ with $\alpha_i \in \{0,1\}$, $\mu_1(v \otimes w)$ is the sum of all terms coming from reordering a single adjacent pair $v_i v_j$, $\mu_2(v \otimes w)$ is the sum of all terms coming from reordering a two adjacent pairs, and similar for higher $t$ powers.  

Because the degree drops by two with every application of the relation, the sum is finite ($k$ is the number of flips of adjacent terms to put the $v_j$'s in ascending order) and $\deg \mu_i = -2i$ for all $i > 0.$  Considering the special case where $v=v_k$ and $w=v_k$ gives $\mu_i(v_k,v_k) = 0$ for all $k \in \{1,2,...,n\}$ and $i>0$.  The operation $\ast$ is $\KK$-linear, making $\mu_i$ linear.  Finally, as $\ast$ is associative, $\trunh$ is a deformation of $\Lambda_{\q}(V) \rtimes G$ over $\CC[t]$. 

On the other hand, assume $D$ is a deformation of $\Lambda_{\q}(V) \rtimes G$ over $\KK[t]$ with the degree of $\mu_i$ equal to $-2i$ and $\mu_i(v_k,v_k) = 0$ for all $k \in \{1,2,...,n\}$ and $i>0$.  We want to show that $D$ is isomorphic to some $\hat{\SH}_{\q,\kappa,t}$.  Because $D$ is a deformation, $D \cong \Lambda_{\q}(V) \rtimes G[t]$ as a vector space over $\KK[t]$. 
Consider the map $\phi: T(V) \rtimes G[t] \rightarrow D$ that sends $$\phi(v_i) = v_i \textrm{ and } \phi(g) = g.$$ We may extend $\phi$, in the first component, to a unique algebra homomorphism from $T(V)$ to $D$.  By the degree condition, we have $\mu_i(\KK G, \KK G) = 0=\mu_i(V, \KK G)=\mu_i(\KK G, V)$.  Therefore
\begin{align*}
\phi((v_{i_1} \cdots v_{i_l}g)(v_{j_1} \cdots v_{j_m}h)) &= v_{i_1} \ast \cdots \ast v_{i_l} \ast g \ast v_{j_1} \ast \cdots \ast v_{j_m} \ast h\\
 &= v_{i_1} \ast \cdots \ast v_{i_l} \ast{}^g v_{j_1} \ast \cdots \ast{}^gv_{j_m} \ast gh \\
 &=\phi(v_{i_1} \cdots v_{i_l}{}^gv_{j_1} \cdots{}^gv_{j_m}gh) 
\end{align*}
and the map can be extended to $\KK[t]$-algebra homomorphism.  The first isomorphism theorem, with this $\phi$, gives us our desired isomorphism between $D$ and $\hat{\SH}_{\q,\kappa,t}$.

We will show that $\phi$ is surjective by strong induction on degree.  By construction, $\phi(g) = g$ and $\phi(v_ig) = v_i \ast g = v_ig + \mu_1(v_i,g)t = v_ig$.  Now, let $v_{i_1} \cdots v_{i_m}g$ be an arbitrary monomial of $D$.  By the induction hypothesis, $v_{i_2} \cdots v_{i_m}g \in \Ima(\phi)$.  Thus $\phi(X) = v_{i_2} \cdots v_{i_m}g$ for some $X \in T(V) \rtimes G[t]$.  Then
\begin{align*}
\phi(v_{i_1}X) & = v_{i_1} \ast \phi(X) \\
 & = v_{i_1} \ast (v_{i_2} \cdots v_{i_m}g) \\
 & = v_{i_1}v_{i_2} \cdots v_{i_m}g + \mu_1(v_{i_1}, v_{i_2} \cdots v_{i_m}g)t + \mu_2(v_{i_1}, v_{i_2} \cdots v_{i_m}g)t^2 + \cdots
\end{align*}
Because $\deg(\mu_i)=2i$, we know $\deg(\mu_j(v_{i_1}, v_{i_2} \cdots v_{i_m}g)t^j)<m+1$ for all $j$ in the finite sum above.  That is, by the induction hypothesis, $\mu_j(v_{i_1}, v_{i_2} \cdots v_{i_m}g)t^j \in \ima \phi$ for all $j$.  Subtracting the appropriate pre-images, we have $v_{i_1}v_{i_2} \cdots v_{i_m}g$ is in the image of $\phi$. 

Now we determine the kernel of $\phi$. It is clear that $v_i^2 \in \ker \phi$ because $\mu_k(v_i \otimes v_i)=0$ for all $k>0$ and $v_i^2=0$. 
Now assume $i \neq j$, then
$$\phi(v_i v_j) = v_i \ast v_j = v_iv_j + \mu_1(v_i,v_j)t \textrm{ and } \phi(v_j v_i) = v_j \ast v_i = v_jv_i + \mu_1(v_j,v_i)t.$$
Note we do not get higher $\mu_i$ terms because of the degree condition.  Also because of the degree conditions, we know $\mu_1(v_i,v_j) \in \KK G$ and $\mu_1(v_j,v_i) \in \KK G$. Using the above expression we get 
\begin{align*}
0=\phi(v_jv_i-q_{ij}v_iv_j) & = v_jv_i + \mu_1(v_j,v_i)t - q_{ij}v_iv_j - q_{ij}\mu_1(v_i,v_j)t \\
 & = v_jv_i - q_{ij}v_iv_j + t(\mu_1(v_j,v_i) - q_{ij}\mu_1(v_i,v_j))\\
 & = 0 + (\mu_1(v_j,v_i) - q_{ij}\mu_1(v_i,v_j))t.
\end{align*}
Because $\phi(g)=g$ for all $g \in G$, 
\begin{equation}
v_jv_i - q_{ij}v_iv_j - \mu_1(v_j,v_i)t + q_{ij}\mu_1(v_i,v_j)t \in \ker \phi . \label{1}
\end{equation}

Let $I[t]$ be the ideal generated by the terms of (\ref{1}) and $v_i^2$.  It is clear that $I[t] \subseteq \ker \phi$. Note $\phi$ induces a surjection $T(V) \rtimes G[t] / I[t] \rightarrow D$.  Because the dimension of $D$ in each degree is less or equal to the corresponding dimension of $T(V) \rtimes G[t] / I[t]$, $I[t]$ is, in fact, equal $\ker \phi$. 

Therefore, by the first isomorphism theorem, $\phi$ induces an isomorphism of algebras $D \cong T(V) \rtimes G[t] / I[t]$ where the right hand side is also isomorphic to an $\hat{\SH}_{\q,\kappa,t}$.  Moreover, the associated graded algebra of $T(V) \rtimes G[t] /I[t]$ is isomorphic to $\Lambda_{\q}(V) \rtimes G[t]$, making $D$ a \tqdha. 
\end{proof}

In order for the deformed algebra of $A$ to be associative, $\mu_1$ must also be a Hochschild 2-cocycle.  That is, for all $a,b,c \in A$, \begin{align*} a\mu_1(b\otimes c)-\mu_1(ab \otimes c) +\mu_1(a \otimes bc) - \mu_1(a \otimes b)c=0.\end{align*}  
As in \cite{NW}, the proof reveals a relation between the functions $\kappa_g$ of Section 2 and Hochschild 2-cocycles $\mu_1$ of this Section and the next.  Namely, \begin{align}\label{kappa}\sum_{g \in G} \kappa_g(v_j, v_i)g = \mu_1(v_i \otimes v_j -q_{ji}v_j \otimes v_i)\end{align} for $i \neq j$ and $\kappa_g(v_i, v_i)=0$. This relation on $\kappa$ also motivates the definition of $\epsilon_{\beta}$ in the construction of our projective resolution in the next Section.  As deformations of an algebra are so intimately tied to the Hochschild cohomology of that algebra, we next look at the Hochschild cohomology of $\Lambda_{\q}(V) \rtimes G$ in order to understand the structure of truncated quantum Drinfeld Hecke algebras from this perspective.


\section{Hochschild cohomology}

We use this Section to develop all the homological information we will need.  

\begin{defn}
Let $A$ be an algebra over a field $\KK$ and let $M$ be an $A$-bimodule.  The Hochschild cohomology of $A$ with coefficients in $M$ is $$\HH^* (A, M) := \Ext^*_{A^e}(A, M)$$ where $A^e=A \otimes A^{op}$ and $A$-bimodules are (left) $A^e$-modules by left and right action in the corresponding tensor factor. 
\end{defn}

When $M=A$, we call $\HH^* (A, A)$ simply the Hochschild cohomology of $A$ and the notation is shortened to $\HH^*(A)$.  The standard choice of projective resolution, on which Hochschild cohomology was originally defined, is the bar resolution.

\begin{defn}
Let $A$ be an algebra over a field $\KK$.  The bar resolution of $A$ is $$\mathbf{B}(A): ...\xrightarrow{\delta_3} A \otimes A \otimes A \otimes A \xrightarrow{\delta_2} A \otimes A \otimes A \xrightarrow{\delta_1} A \otimes A \xrightarrow{\delta_0} A \rightarrow 0$$ where $\delta_m(a_0 \otimes a_1 \otimes ... \otimes a_{m+1}) = \sum_{i=0}^m (-1)^i a_0 \otimes ... \otimes a_i a_{i+1} \otimes ... \otimes a_{m+1}.$
\end{defn}

In the case of skew group algebras, Hochschild cohomology has a particular form given by the isomorphism $$\HH^*(A \rtimes G) \cong \HH^*( A, A \rtimes G)^G$$ if $G$ is a finite group acting on a $\KK$-algebra $A$ and $\ch \KK \nmid |G|$.  See \cite{S} for a proof of this result in the more general setting of Hopf Galois extensions and \cite{L} for the case of Hochschild homology.  Recall from the outset we restricted $G$ to groups such that $\ch \KK \nmid |G|$.  This is one of the settings in which we explicitly need this assumption.  The above isomorphism allows us to compute $\HH^*( \Lambda_{\q}(V) \rtimes G)$ by instead finding the $G$-invariant subspace of $\HH^*( \Lambda_{\q}(V), \Lambda_{\q}(V) \rtimes G) = \bigoplus_{g \in G} \HH^*(\Lambda_{\q}(V), \Lambda_{\q}(V)g)$.  Thus we would like to construct a small resolution of $\Lambda_{\q}(V)$ to compute Hochschild cohomology.  One such small resolution of $\Lambda_{\q}(V)$ was given in \cite{Grimley} which we describe in the next subsection.  

\subsection{Projective resolution of $\Lambda_{\q}(V)$}

The bar resolution can be computationally cumbersome so we define a sub-resolution on which we will compute cohomology.  Let us start by defining a generalization of the generators $\tilde{f}_i^n$ of \cite{BGMS}.  We begin with a more intuitive description.  Define
$$\epsilon_{i_1, i_2, ..., i_n}=\sum_{\alpha \in S(i_1, ..., i_n)} (-q)^{\alpha} \otimes v_{\alpha} \otimes 1$$ where $S(i_1, ..., i_n)$ is the set of multi-set permutations on a set with $i_1$ $1$'s, $i_2$ $2$'s, ..., and $i_n$ $n$'s, $v_{\alpha}$ is the $(i_1+i_2+...+i_n)$-fold tensor product in which the orders of $v_1, v_2, ..., v_n$ are given by the multi-set permutation $\alpha$, and $(-q)^{\alpha}$ is the product of the negative quantum coefficients that occur when permuting the $v_j$'s from ascending order to the arrangement given by $\alpha$.  For example, $$\epsilon_{1,0,0,...0}=1 \otimes v_1 \otimes 1,$$ $$\epsilon_{1,1,0,...,0}=1 \otimes v_1 \otimes v_2 \otimes 1 -q_{21} \otimes v_2 \otimes v_1 \otimes 1,$$ $$\epsilon_{2, 1,0, ..., 0} =1 \otimes v_1 \otimes v_1 \otimes v_2 \otimes 1 - q_{21} \otimes v_1 \otimes v_2 \otimes v_1 \otimes 1 + q_{21}^2 \otimes v_2 \otimes v_1 \otimes v_1 \otimes 1.$$  

To more precisely define $\epsilon_{\beta}$, let's introduce some notation.  Let $\beta \in \mathbb{N}^n$ and $\alpha \in \{0,1\}^n$.  Define $\epsilon_{\beta}=\epsilon_{\beta_1, \beta_2,..., \beta_n}$ using the multi-index notation, define $f_{\beta}$ similarly, and define $v^{\alpha}=v_1^{\alpha_a} v_2^{\alpha_2} \cdots v_n^{\alpha_n}$.  Denote $|\beta|=\beta_1+\beta_2+ \cdots + \beta_n$ and finally, denote by $[j]=(0,...,0,1,0,...,0)$ the standard basis $n$-tuple with one 1 in the $j$th coordinate.

Let $f_{(0,0,...,0)}=1$, $f_{[j]}=v_j $ for all $j \in \{1,2,...,n\}$.  If $\beta_j<0$ for some $j \in \{1,2,...,n\}$, then set
 $f_{\beta}=0$.  For arbitrary $\beta \in \mathbb{N}^n$, $$f_{\beta}=\sum_{j=1}^n \prod_{k>j} (-q_{kj})^{\beta_k} f_{\beta-[j]} \otimes v_j .$$  Intuitively, each term in the sum of $f_{\beta}$ is the consequence of moving an arbitrary $v_j$ to the end of the tensor product and multiplying by the negative of the quantum scalars that would have occurred in $\Lambda_{\q}(V)$ from this movement.  The terms $\epsilon_{\beta}$ are then precisely $1 \otimes f_{\beta} \otimes 1$. Note, with this new notation, we can update the relation (\ref{kappa}) to be $\kappa(v_j,v_i)=\mu_1(\epsilon_{[i]+[j]})$ for all $i\neq j$ further motivating its construction.

Consider the complex, built upon these $\epsilon_{\beta}$ terms, $$\mathbf{C}: ...\xrightarrow{d_{m+1}} \bigoplus_{|\beta|=m} \Lambda_{\q}(V) \e_{\beta} \Lambda_{\q}(V) \xrightarrow{d_{m}} \bigoplus_{|\beta|=m-1} \Lambda_{\q}(V) \e_{\beta} \Lambda_{\q}(V) \xrightarrow{d_{m-1}} ...$$ where 
\begin{align*} d_m(\epsilon_{\beta})=& \sum_{j=1}^n (-1)^{\sum_{l<j} \beta_l} ( \prod_{l<j} q_{jl}^{\beta_l} v_j \epsilon_{\beta-[j]} + (-1)^{\beta_j} \prod_{l >j} q_{lj}^{\beta_l} \epsilon_{\beta-[j]} v_j ).
\end{align*} 
It is simple to check that $d^2=0$.  By \cite[Lemma 3.4]{Grimley}, $(\mathbf{C},d)$ is a subcomplex of the bar resolution $(\mathbf{B}(\Lambda_{\q}(V)), \delta)$.  

\begin{prop}
$(\mathbf{C}, d)$ is a graded projective resolution of $\Lambda_{\q}(V)$ as a $\Lambda_{\q}(V)^e$-module.
\end{prop}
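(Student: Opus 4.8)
The plan is to show $(\mathbf{C},d)$ is a projective resolution of $\Lambda_{\q}(V)$ as a $\Lambda_{\q}(V)^e$-module by verifying three things: (1) each term $\bigoplus_{|\beta|=m}\Lambda_{\q}(V)\e_\beta\Lambda_{\q}(V)$ is projective (indeed free) as a $\Lambda_{\q}(V)^e$-module; (2) the complex is exact in positive degrees; and (3) the augmentation $\Lambda_{\q}(V)\otimes\Lambda_{\q}(V)\to\Lambda_{\q}(V)$ (multiplication) identifies $H_0(\mathbf{C})$ with $\Lambda_{\q}(V)$. Projectivity in (1) is immediate: each $\e_\beta = 1\otimes f_\beta\otimes 1$ is a free generator, so $\Lambda_{\q}(V)\e_\beta\Lambda_{\q}(V)\cong \Lambda_{\q}(V)\otimes\Lambda_{\q}(V) = \Lambda_{\q}(V)^e$ as a bimodule, and the term in homological degree $m$ is a direct sum of $\binom{n+m-1}{m}$-many (equivalently, indexed over $|\beta|=m$) copies of the free module $\Lambda_{\q}(V)^e$. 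The augmentation claim (3) follows by comparing with the bar resolution via the inclusion of \cite[Lemma 3.4]{Grimley} already cited in the excerpt.

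The main work, therefore, is \textbf{exactness}. Since the excerpt has already established via \cite[Lemma 3.4]{Grimley} that $(\mathbf{C},d)$ is a \emph{subcomplex} of the bar resolution $(\mathbf{B}(\Lambda_{\q}(V)),\delta)$, the cleanest route is not to prove exactness of $\mathbf{C}$ directly but to exhibit a contracting homotopy, or equivalently to show the inclusion $\iota:\mathbf{C}\hookrightarrow\mathbf{B}(\Lambda_{\q}(V))$ is a quasi-isomorphism. The approach I would take is to \textbf{construct an explicit chain homotopy} realizing $\mathbf{C}$ as a deformation retract of the bar resolution. Concretely, I would build $\KK$-linear contracting maps $s_m:\bigoplus_{|\beta|=m}\Lambda_{\q}(V)\e_\beta\Lambda_{\q}(V)\to\bigoplus_{|\beta|=m+1}\Lambda_{\q}(V)\e_\beta\Lambda_{\q}(V)$ satisfying $d_{m+1}s_m + s_{m-1}d_m = \id$ in each positive degree, with the base case handled by the augmentation. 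The natural candidate for $s$ peels off a leading $v_j$ and reindexes $\e_\beta\mapsto\e_{\beta+[j]}$, with the quantum scalars $\prod_{k>j}(-q_{kj})^{\beta_k}$ arranged exactly so that $ds + sd$ telescopes to the identity — this is precisely why the generators $f_\beta$ were defined with that recursion $f_\beta=\sum_j\prod_{k>j}(-q_{kj})^{\beta_k}f_{\beta-[j]}\otimes v_j$.

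The \textbf{hard part} will be the bookkeeping of the quantum signs and scalars in verifying $d_{m+1}s_m+s_{m-1}d_m=\id$: both $d$ and the homotopy $s$ carry intricate products of the $q_{ij}$ together with the sign $(-1)^{\sum_{l<j}\beta_l}$ coming from reordering, and one must check these cancel in pairs except for the surviving identity term. I expect this to reduce to a finite multi-index identity among the coefficients $\prod_{l<j}q_{jl}^{\beta_l}$, $\prod_{l>j}q_{lj}^{\beta_l}$, and the analogous factors in $s$; establishing it amounts to a careful induction on $|\beta|$ using the defining recursion for $f_\beta$. An alternative, possibly shorter, route that I would keep in reserve is to tensor down to the case of a single generator (using that $\Lambda_{\q}(V)$ is, up to the quantum twisting, a smash/twisted tensor product of the one-variable truncated algebras $\KK[v_i]/(v_i^2)$) and to recognize $\mathbf{C}$ as the total complex of the corresponding tensor product of the well-known length-graded Koszul-type resolutions of each $\KK[v_i]/(v_i^2)$; exactness of the total complex then follows from the Künneth theorem once each factor is checked by hand. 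Either way, the substantive obstacle is purely the sign-and-scalar combinatorics, the homological structure being standard.
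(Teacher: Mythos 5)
Your ``reserve'' route is, in fact, the paper's actual proof: the authors do not construct a contracting homotopy at all. They invoke the first author's earlier construction (\cite[Section 3]{Grimley}) of a resolution of $\Lambda_{\q}(V)$ as the total complex of a twisted tensor product $\Tot(\mathbf{D_1}\otimes^{t_1}\cdots\otimes^{t_{n-1}}\mathbf{D_n})$ of the standard $2$-periodic resolutions of the factors $\KK[v_i]/(v_i^2)$, cite Bergh--Oppermann (\cite[Lemmas 4.3, 4.5]{BO}) for the fact that this total complex is a graded projective resolution, and then only need to write down the isomorphism of complexes sending $\e_{\beta}$ to the generator $1\otimes 1$ in homological multidegree $\beta$. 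All of the ``sign-and-scalar combinatorics'' you flag as the hard part is thereby outsourced to the cited results; note they do not even need the K\"unneth theorem, since \cite{BO} proves exactness of the twisted total complex directly. So if you pursue that branch and verify the chain-map/isomorphism claim, you recover the paper's argument.

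Your primary route, as written, has a genuine gap. First, asserting that a $\KK$-linear homotopy with $d_{m+1}s_m+s_{m-1}d_m=\id$ exists is logically equivalent (over a field) to the exactness you are trying to prove, so nothing is gained unless the specific candidate $s$ is written down and verified; you never give a formula, only the phrase ``peels off a leading $v_j$.'' Second, the naive peeling map is modeled on the Koszul resolution of a polynomial ring, but here each variable satisfies $v_i^2=0$ and contributes an \emph{infinite}, $2$-periodic strand: the index $\beta$ ranges over all of $\NN^n$, the generators $\epsilon_{\beta}$ with some $\beta_i\geq 2$ contain repeated tensor factors $v_i\otimes v_i$, and already in one variable the differential $d^i_m(1\otimes 1)=v_i\otimes 1+(-1)^m1\otimes v_i$ alternates with parity, so any contracting homotopy must alternate as well. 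A parity-blind ``move one $v_j$ and reindex $\beta\mapsto\beta+[j]$'' map will not satisfy $ds+sd=\id$ without modification, and the required correction is exactly the content you defer. Either carry out that verification in full (including the $\beta_i\geq 2$ strata) or switch to the twisted-tensor-product argument, which is shorter and is what the paper does.
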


\begin{proof}
In \cite[Section 3]{Grimley}, the first author constructed a graded projective $(\Lambda_{\q}(V))^e$-module resolution of $\Lambda_{\q}(V)$ given by the total complex of the twisted tensor product of $n$ copies of the graded projective $\KK[v_i]/(v_i^2)^e$-module resolution of $\KK[v_i]/(v_i^2)$
$$\mathbf{D_i}: ... \xrightarrow{d^i_3} (\KK[v_i]/(v_i^2))^e\langle 2 \rangle \xrightarrow{d^i_2} (\KK[v_i]/(v_i^2))^e \langle 1 \rangle \xrightarrow{d^i_1} (\KK[v_i]/(v_i^2))^e \xrightarrow{\mu} \KK[v_i]/(v_i^2)  \rightarrow 0$$ where $\mu$ is multiplication, and $d^i_m(1 \otimes 1)=v_i \otimes 1 + (-1)^m 1 \otimes v_i$.  See \cite{Grimley} for the details of this construction.  For the purposes of this article, we note that the argument hinged upon an isomorphism between graded modules in the total complex of the twisted tensor product $(\textrm{Tot}(\mathbf{D_1} \otimes^{t_1} \mathbf{D_2} \otimes^{t_2} \cdots \otimes^{t_{n-1}} \mathbf{D_n})_m$ and $\bigoplus_{|\beta|=m} \Lambda_{\q}(V)^e \langle \beta \rangle$ for $\beta \in \ZZ^n$ given by \cite[Lemma 4.3]{BO}.  The former is a graded projective resolution of $\Lambda_{\q}(V)$ as a $\Lambda_{\q}(V)^e$-module by \cite[Lemma 4.5]{BO} and the latter induced complex we claim is isomorphic to $(\mathbf{C},d)$.  The chain map $\phi: \mathbf{C}_m \rightarrow \bigoplus_{|\beta|=m} \Lambda_{\q}(V)^e \langle \beta \rangle$ given by sending $\e_{\beta}$ to the copy of $1 \otimes 1$ with homological degree $\beta$, induces the graded isomorphism we need between $(\mathbf{C}, d)$ and the latter complex. 
\end{proof}

\begin{defn} A projective resolution is $G$-compatible if $G$ acts on each algebra in the resolution and the action commutes with the differentials.
\end{defn}

The bar resolution, $\mathbf{B}(\Lambda_{\q}(V))$, is $G$-compatible for all group actions on $\Lambda_{\q}(V)$.  It turns out that this is all we need for the subresolution $\mathbf{C}$ to be $G$-compatible as well.

\begin{lemma}\label{commute}
The group action on $V$ extends to a $G$-compatible action on $\mathbf{C}$ if and only if 
\begin{center}
$(1+q_{ij})(g_i^r g_j^s-q_{sr}g_i^s g_j^r)=0$ and \\
$(1+q_{ij})g_i^r g_j^r=0$ \\
\end{center} for all $r \neq s$ and $i \neq j$.
\end{lemma}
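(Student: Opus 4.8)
The plan is to realize the sought $G$-action on $\mathbf{C}$ as the restriction of the canonical diagonal action on the bar resolution and to determine exactly when that restriction survives. Since $(\mathbf{C},d)$ is a subcomplex of $(\mathbf{B}(\Lambda_\q(V)),\delta)$, and the diagonal action $g\cdot(a_0\otimes\cdots\otimes a_{m+1})={}^g a_0\otimes\cdots\otimes{}^g a_{m+1}$ commutes with $\delta$ whenever $G$ acts on $\Lambda_\q(V)$, the $V$-action extends $G$-compatibly to $\mathbf{C}$ precisely when the diagonal action carries $\mathbf{C}$ into itself. Because the action is semilinear over $\Lambda_\q(V)^e$ and $\mathbf{C}_m=\bigoplus_{|\beta|=m}\Lambda_\q(V)\epsilon_\beta\Lambda_\q(V)$ is generated by the $\epsilon_\beta$, stability reduces to checking the generators; and since $g\cdot\epsilon_\beta=1\otimes{}^g f_\beta\otimes 1$ again has trivial outer tensor factors, while any element of $\mathbf{C}_m$ with trivial outer factors is a $\KK$-combination of the $\epsilon_{\beta'}$, the whole question becomes: writing $W_m\subseteq V^{\otimes m}$ for the span of the middle tensors $f_\beta$, does $g(W_m)\subseteq W_m$ hold for every $m$?

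First I would dispose of low degrees: $\mathbf{C}_0=\Lambda_\q(V)^e$ and $W_1=V$ are preserved by any action induced from $V$, so nothing is imposed there. For degree two, $W_2$ is spanned by $f_{2[i]}=v_i\otimes v_i$ and $f_{[i]+[j]}=v_i\otimes v_j-q_{ji}\,v_j\otimes v_i$ (for $i<j$), that is, $W_2$ is exactly the space $R$ of quadratic relations of $\Lambda_\q(V)$, the kernel of the multiplication map $V^{\otimes 2}\to\Lambda_\q(V)_2$. I would test membership $g\cdot f_\beta\in W_m$ by applying the contraction (multiplication) maps $V^{\otimes m}\to V^{\otimes k}\otimes\Lambda_\q(V)_2\otimes V^{\otimes(m-2-k)}$: concretely this is the observation that $\delta(g\cdot\epsilon_\beta)-g\cdot d(\epsilon_\beta)$ is a sum of ``inner'' terms $1\otimes\big(\cdots\otimes{}^gv_a\,{}^gv_b\otimes\cdots\big)\otimes 1$ that do not live in $\mathbf{C}$ and so must vanish. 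In degree two there is a single such term, and its vanishing reads $({}^gv_i)^2=0$ for $\beta=2[i]$ and ${}^gv_i\,{}^gv_j=q_{ji}\,{}^gv_j\,{}^gv_i$ for $\beta=[i]+[j]$; unwinding these two identities with the defining relations $v_av_b=q_{ba}v_bv_a$ and $v_a^2=0$ produces the two displayed conditions and, in particular, recovers the requirement of Lemma~\ref{LambdaAut} that $G$ act on $\Lambda_\q(V)$ by automorphisms, i.e. that $g(R)\subseteq R$.

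The crux is then to show that no genuinely new conditions appear in degrees $m\ge 3$, i.e. that $g(R)\subseteq R$ already forces $g(W_m)\subseteq W_m$. Here I would use the structure underlying the construction of $\mathbf{C}$: because $\Lambda_\q(V)$ is Koszul and $\mathbf{C}$ is the associated twisted tensor product resolution, the middle spaces satisfy $W_m=\bigcap_{k=0}^{m-2}V^{\otimes k}\otimes R\otimes V^{\otimes(m-2-k)}$. The containment $\subseteq$ follows directly from the recursion for $f_\beta$ (each $f_\beta$ is a sum of terms of the form $f_{\beta'}\otimes v_l$ with $f_{\beta'}\in W_{m-1}$, and symmetrically from the left), and equality follows from the dimension count $\dim W_m=\#\{\beta\in\NN^n:|\beta|=m\}=\binom{m+n-1}{n-1}$, which matches the dimension of the corresponding Koszul-dual piece. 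Granting this identity, the diagonal action preserves each tensor factor $V$ and, by the degree-two analysis, preserves $R$, hence preserves every summand $V^{\otimes k}\otimes R\otimes V^{\otimes(m-2-k)}$ and therefore their intersection $W_m$; this propagates stability to all degrees and gives the reverse implication, while the forward implication is obtained by restricting to degree two. I expect the main obstacle to be exactly this propagation step—establishing the intersection description of $W_m$ (equivalently, that the higher obstruction terms are forced combinations of the quadratic ones)—rather than the essentially mechanical degree-two computation that extracts the two stated conditions.
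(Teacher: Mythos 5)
The paper does not actually print a proof of this lemma; its implicit argument is exactly your opening move — the diagonal $G$-action on the bar resolution commutes with $\delta$, so $G$-compatibility of $\mathbf{C}$ reduces to stability of $\mathbf{C}$ under that action, which in degree two is the computation recorded right after the lemma ($^g\epsilon_{[r]+[s]}=\sum_{l\le k}(\cdots)\epsilon_{[l]+[k]}$) and yields the two displayed conditions. So your proposal is correct and follows the same route, but it goes further than the paper in one genuinely useful respect: you identify and close the gap of why no new conditions arise in homological degree $m\ge 3$, via the Koszul description $W_m=\bigcap_k V^{\otimes k}\otimes R\otimes V^{\otimes(m-2-k)}$ (containment from the two-sided recursion for $f_\beta$, equality by the dimension count $\binom{m+n-1}{n-1}$), from which stability of $R$ propagates to all $W_m$. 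Two small caveats: the identification $W_2=R$ and the dimension count both rely on the standing assumption $q_{ij}q_{ji}=1$ (without which $\Lambda_{\q}(V)$ does not have the stated basis and $\mathbf{C}$ is not a resolution), so you should flag that hypothesis; and the final bookkeeping that turns ``$g$ preserves $R$'' into the exact scalars $(1+q_{ij})(g_i^rg_j^s-q_{sr}g_i^sg_j^r)=0$ and $(1+q_{ij})g_i^rg_j^r=0$ is left as ``unwinding'' — it is mechanical but worth writing out, since matching it against the $\det_{srij}$ form of Lemma~\ref{LambdaAut} is where the index conventions are easiest to garble.
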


Comparing to Lemma \ref{LambdaAut}, $\mathbf{C}$ is $G$-compatible precisely when $G$ acts by automorphisms on $\Lambda_{\q}(V)$.  To close this section, let's record how elements of $G$ must act on the generators of the resolution, $\epsilon_{\beta}$.  Lemma \ref{commute} implies $$^g \epsilon_{[r]+[s]} = \sum_{l \leq k} (g_l^r g_k^s - q_{sr} g_l^s g_k^r) \epsilon_{[l]+[k]} \textrm{ and } ^g \epsilon_{2[r]} = \sum_{l \leq k} g_l^r g_k^r \epsilon_{[l]+[k]}$$ for all $g \in G$ and $r<s$.

\subsection{Relevant 2-cocycles}
To compute $\HH^*(\Lambda_{\q}(V), \Lambda_{\q}(V)g)$, we must apply the functor \newline $\Hom_{\Lambda_{\q}(V)^e}(-, \Lambda_{\q}(V)g)$ to our projective resolution from the previous section.  

Let $\epsilon_{\beta}^*$ be the dual $\Lambda_{\q}(V)^e$-module homomorphism, sending $\epsilon_{\beta}$ to $1$ and all other terms to $0$.  After passing through the isomorphism $\Hom_{(\Lambda_{\q}(V))^e}(\Lambda_{\q}(V) \e_{\beta} \Lambda_{\q}(V), \Lambda_{\q}(V)g) \cong \Hom_{\KK}(\e_{\beta}, \Lambda_{\q}(V)g)$, 
on which homomorphisms are completely determined by the image of $\e_{\beta}$, we get the complex

$$\Hom_{(\Lambda_{\q}(V))^e}(\mathbf{C}, \Lambda_{\q}(V) g): ...\xrightarrow{d^{m-1}} \bigoplus_{|\beta|=m-1} \Lambda_{\q}(V) g \e_{\beta}^* \xrightarrow{d^{m}} \bigoplus_{|\beta|=m} \Lambda_{\q}(V)g \e_{\beta}^* \xrightarrow{d^{m+1}} ...$$ where 
\begin{align*} d^m((v^{\alpha} g)\epsilon_{\beta}^*)=&\sum_{j=1}^n (-1)^{\sum_{l<j} \beta_l} (\prod_{l<j} q_{jl}^{\beta_l}(v_j v^{\alpha} g) - (-1)^{\beta_j} \prod_{l >j} q_{lj}^{\beta_l} (v^{\alpha} (^g v_j)  g)  ) \epsilon_{\beta+[j]}^*\\
=&\sum_{j=1}^n (-1)^{\sum_{l<j} \beta_l} ( \prod_{l<j}q_{jl}^{\beta_l- \alpha_l} (v^{\alpha+[j]}  g) - (-1)^{\beta_j} \prod_{l >j} q_{lj}^{\beta_l} \sum_{i=1}^n g_i^j \prod_{i<l} q_{il}^{\alpha_l}(v^{\alpha + [i]}  g)  ) \epsilon_{\beta+[j]}^*.
\end{align*}

For the purposes of recovering the structure of \tqdha s, we are interested in those 2-cocycles that 
 meet the degree requirement of Theorem \ref{mu}.  That is, we want to find $G$-invariant elements $\eta \in \Hom_{(\Lambda_{\q}(V))^e}(\mathbf{C}_2, \Lambda_{\q}(V)g)$ such that $d^3(\eta)=0$ and $\deg(\eta)=-2$.  All elements in $\mathbf{C}_2$ have homological and polynomial degree 2 therefore the image of $\eta$ must be in $\KK g$.  We will call such maps constant maps.

Let $\eta$ be an arbitrary constant 2-cocycle.  Then $\eta$ is of the form $$\eta = \sum_{g \in G} \sum_{1 \leq r \leq s \leq n} (\kappa_{rs}^g  g) \epsilon_{[r]+[s]}^*$$ for some scalars $\kappa_{rs}^g \in \KK$.  Note $\epsilon_{[r]+[s]}^*$ is order independent but $\kappa_{rs}^g$ is only defined for $r \leq s$.  $(\kappa_{rs}^g g) \epsilon_{[r]+[s]}^*$ is the homomorphism sending $$1 \otimes v_r \otimes v_s \otimes 1 -q_{sr}\otimes v_s \otimes v_r \otimes 1 \mapsto \kappa_{rs}^g g.$$  We may extend 
$\{\kappa_{rs}^g\}_{r<s}$ to all $r \neq s$ by defining $\kappa_{sr}^g$ to be $g$-component of the image of $-q_{sr}\epsilon_{[r]+[s]}=1 \otimes v_s \otimes v_r \otimes 1 -q_{rs} \otimes v_s \otimes v_r \otimes 1$ under $\eta$.  With this 
compatible extension, $\kappa_{sr}^g=-q_{rs} \kappa_{rs}^g$.

The necessary and sufficient conditions for which the coefficient of $\epsilon_{[i]+[j]+[k]}$ is 0 in $d^3(\eta)$ for any $i \leq j \leq k$ in $\HH(\Lambda_{\q}(V), \Lambda_{\q}(V) g)$ are 
\begin{align}\label{(5.4)} \kappa_{jk}^g(v_i -&q_{ji}q_{ki}(^g v_i )) - \kappa_{ik}^g(q_{ji}v_j -q_{kj}{}^g v_j ) + \kappa_{ij}^g(q_{ki}q_{kj}v_k-{}^g v_k) = 0, \end{align} \begin{align}\label{(5.5)} \kappa_{jj}^g(q_{kj}^2v_k -&{}^g v_k ) + \kappa_{jk}^g(v_j +q_{kj}{}^g v_j ) = 0, \end{align} \begin{align}\label{(5.6)} \kappa_{jj}^g(v_i -&q_{ji}^2 {}^g v_i ) - \kappa_{ij}^g(q_{ji}v_j +{}^g v_j) = 0, \textrm{ and} \end{align} \begin{align}\label{(5.7)} \kappa_{kk}^g(v_k -{}^g v_k) = 0. \end{align}

What remains is to determine the $G$-invariant subspace, placing additional constraints on our 2-cocycles.  Let $h \in G$ and $r<s$, then

\begin{align*}
(^h \eta)(\epsilon_{[r]+[s]}) =& h \sum_{g \in G} \sum_{i<j} (\kappa_{ij}^g  g) \epsilon_{[i]+[j]}^*(^{h^{-1}} \epsilon_{[r]+[s]}) h^{-1} \\
=& h \sum_{g \in G} \sum_{i<j} (\kappa_{ij}^g g) \epsilon_{[i]+[j]}^*(\sum_{l \leq k} [(h^{-1})_l^r (h^{-1})_k^s - q_{sr} (h^{-1})_l^s (h^{-1})_k^r] \epsilon_{[k]+[l]}) h^{-1} \\
=& h \sum_{g \in G} \sum_{i<j} (\kappa_{ij}^g  g) [(h^{-1})_i^r (h^{-1})_j^s - q_{sr} (h^{-1})_i^s (h^{-1})_j^r] h^{-1} \\
=& \sum_{g \in G} \sum_{i<j} \kappa_{ij}^g [(h^{-1})_i^r (h^{-1})_j^s - q_{sr} (h^{-1})_i^s (h^{-1})_j^r]  hgh^{-1} 
\end{align*}

and 
\begin{align*}
(^h \eta)(\epsilon_{2[r]}) =&  h \sum_{g \in G} \sum_{i<j} (\kappa_{ij}^g  g) \epsilon_{[i]+[j]}^*(^{h^{-1}} \epsilon_{2[r]}) h^{-1} \\
=&  h \sum_{g \in G} \sum_{i<j} (\kappa_{ij}^g g) \epsilon_{[i]+[j]}^*(\sum_{l \leq k} (h^{-1})_l^r (h^{-1})_k^r  \epsilon_{[k]+[l]})h^{-1}\\
=& \sum_{g \in G} \sum_{i<j} \kappa_{ij}^g (h^{-1})_i^r (h^{-1})_j^r   hgh^{-1}. 
\end{align*}
In order for $^h \eta = \eta$, we must have,  for all $h,g \in G$ and $r<s$, \begin{align}\label{(5.8)} \sum_{i<j} \kappa_{ij}^g [(h^{-1})_i^r (h^{-1})_j^s - q_{sr} (h^{-1})_i^s (h^{-1})_j^r] = \kappa_{rs}^{h g h^{-1}} \textrm{ and } \sum_{i<j} (h^{-1})_i^r (h^{-1})_j^r \kappa_{ij}^g = 0. \end{align}

Now that we have the necessary information for determining Hochschild 2-cocycles of \newline $\Lambda_{\q}(V) \rtimes G$, we can characterize those that are truncated quantum Drinfeld Hecke algebras.

\begin{thm}
If the $G$ action on $V$ extends to an action on $\Lambda_{\q}(V)$, then each constant Hochschild 2-cocycle of $\Lambda_{\q}(V) \rtimes G$ that sends $\epsilon_{2[i]} \mapsto 0$ for all $i \in \{1, 2, ..., n\}$ produces a truncated quantum Drinfeld Hecke algebra.
\end{thm}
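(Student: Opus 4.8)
The plan is to extract from the given cocycle a parameter $\kappa$ and check that it satisfies the six conditions of Theorem \ref{ifftheorem}, so that $\hat{\SH}_{\q,\kappa,1}$ is a \tqdha. Writing the constant 2-cocycle as $\eta = \sum_{g}\sum_{r \le s}(\kappa_{rs}^g g)\,\epsilon_{[r]+[s]}^*$ with the compatible extension $\kappa_{sr}^g = -q_{rs}\kappa_{rs}^g$, I would \emph{define} $\kappa$ through the correspondence (\ref{kappa}), namely $\kappa(v_j,v_i) = \eta(\epsilon_{[i]+[j]})$ for $i<j$; unwinding the quantum 2-form normalization this reads $\kappa_g(v_i,v_j) = -q_{ij}\kappa_{ij}^g$ for $i<j$ and $\kappa_g(v_j,v_i)=\kappa_{ij}^g$. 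The hypothesis $\eta(\epsilon_{2[i]}) = 0$ is exactly $\kappa_{ii}^g = 0$ for all $i$ and all $g$. The entire proof is then a translation of the cocycle conditions (\ref{(5.4)})--(\ref{(5.7)}) and the $G$-invariance conditions (\ref{(5.8)}) into conditions (i)--(vi).

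Conditions (i) and (ii) are immediate. Condition (i) holds because $G$ acts on $\Lambda_{\q}(V)$ by automorphisms (this is the hypothesis) and because $q_{ij} = q_{ji}^{-1}$ is forced by the defining relations of $\Lambda_{\q}(V)$. Condition (ii) holds because $\kappa_{ii}^g = 0$ gives $\kappa(v_i,v_i) = 0$, while the compatible extension $\kappa_{sr}^g = -q_{rs}\kappa_{rs}^g$ is precisely the quantum 2-form antisymmetry $\kappa(v_j,v_i) = -q_{ij}^{-1}\kappa(v_i,v_j)$.

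Conditions (iii) and (v) then follow by direct substitution. Substituting $\kappa_g(v_i,v_j) = -q_{ij}\kappa_{ij}^g$ into condition (iii) and comparing with (\ref{(5.4)}) multiplied by the nonzero scalar $-q_{ij}q_{ik}q_{jk}$, the two agree term by term, using only $q_{ij} = q_{ji}^{-1}$; thus the cocycle condition forces (iii). Likewise, once $\kappa_{jj}^g = \kappa_{kk}^g = 0$, the relations (\ref{(5.5)}) and (\ref{(5.6)}) collapse (after multiplying by $q_{ij}$ and using $q_{ij}q_{ji}=1$) to the two halves of condition (v), while (\ref{(5.7)}) becomes vacuous.

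The delicate part, where I expect the real work, is deriving the equivariance conditions (iv) and (vi) from the invariance equations (\ref{(5.8)}): a naive substitution does \emph{not} match, because of an extra quantum scalar $q_{ij}$ coming from $\kappa_g(v_i,v_j) = -q_{ij}\kappa_{ij}^g$, and one must invoke the automorphism relations of Lemma \ref{LambdaAut}. For (vi), the relation $g_i^r g_j^r(1 + q_{ij}) = 0$ shows that only indices with $q_{ij} = -1$ contribute to the relevant sum, so the factor $q_{ij}$ can be absorbed and the second equation of (\ref{(5.8)}) yields condition (vi). For (iv), after substituting into the first equation of (\ref{(5.8)}) one is left with a difference whose coefficient of each $\kappa_{ij}^g$ is $(q_{rs} - q_{ij})h_i^r h_j^s + (q_{ij}q_{rs} - 1)h_i^s h_j^r$; the automorphism relation $\det_{srji}(h) = -q_{ij}\det_{srij}(h)$ rearranges (using $q_{rs}=q_{sr}^{-1}$) to $(q_{rs} - q_{ij})h_i^r h_j^s = (1 - q_{ij}q_{rs})h_i^s h_j^r$, which makes every such coefficient vanish and reconciles (\ref{(5.8)}) with condition (iv). Having verified (i)--(vi), Theorem \ref{ifftheorem} gives that $\hat{\SH}_{\q,\kappa,1}$ is a \tqdha, which is the assertion.
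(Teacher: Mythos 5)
Your proposal is correct and follows essentially the same route as the paper: identify $\kappa_{ij}^g$ with $\kappa_g(v_j,v_i)$ via the correspondence (\ref{kappa}), note that $\eta(\epsilon_{2[i]})=0$ gives the quantum 2-form condition, and then translate the cocycle equations (\ref{(5.4)})--(\ref{(5.7)}) and the invariance equations (\ref{(5.8)}) into conditions (i)--(vi) of Theorem \ref{ifftheorem}, using the automorphism relations of Lemma \ref{commute} to reconcile the quantum scalars in (iv) and (vi). Your treatment of the scalar bookkeeping for (iv) and (vi) is more explicit than the paper's (which disposes of it in one sentence each), but it is the same argument.
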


\begin{proof}
Let $$\eta=\sum_{g \in G} \sum_{1 \leq r \leq s \leq n} (\kappa_{rs}^g  g) \epsilon_{[r]+[s]}^*$$ be a constant 2-cocycle.

We compare the conditions of 2-cocycles from this section to the conditions on truncated quantum Drinfeld Hecke algebras given in Theorem \ref{ifftheorem}.  To compare the two sections, we set $\kappa_{ij}^g=\kappa_g(v_j,v_i)$, as suggested by the proof of Theorem \ref{mu}.  Theorem \ref{ifftheorem} condition (i) holds as a result of Lemma \ref{commute}.  The requirement that cocycles send $\epsilon_{2[i]} \mapsto 0$ for all $i \in \{1, 2, ..., n\}$, forces $\kappa_{ii}^g=0$ $\forall i \in \{1,2,...,n\}$ by definition.  Therefore $\kappa$ is a quantum 2-form and condition (ii) is met.  

Because we set $\kappa_{ij}^g=\kappa_g(v_j,v_i)$, to compare to Theorem \ref{ifftheorem}, we should first substitute $\kappa_{ij}^g$ with $-q_{ji} \kappa_{ji}^g$ in all cases containing such terms for $i \neq j$.  We describe all other manipulations necessary to match the conditions in Theorem \ref{ifftheorem} precisely below.

As a result of $\kappa_{ii}^g$ being 0 for all $i \in \{1,2,...,n\}$, equations \ref{(5.5)} and \ref{(5.6)} become $$\kappa_{jk}^g(v_j+q_{kj} {}^gv_j)=0 \textrm{ and } \kappa_{ij}^g (q_{ji}v_j+ {}^gv_j)=0$$ respectively for $i<j<k$.  Multiply the first equation by $-q_{jk}^2$ and the second equation by $-q_{ij}^2$ to match the first and second conditions of (v).

If we multiply equation (\ref{(5.4)}) by $q_{ij}q_{ik}q_{jk}$, then (\ref{(5.4)}) becomes condition (iii).  Condition (iv) is the first condition in equation (\ref{(5.8)}), after rewriting the scalar using Lemma \ref{commute}. The first condition of (vi) is met by the second condition in equation (\ref{(5.8)}), after using the relation $-q_{ij}g_i^r g_j^r=g_i^r g_j^r$ from Lemma \ref{commute} to rewrite the scalar.  
\end{proof}


\section{Basic examples}

We conclude this paper by providing examples that show the range of the class of truncated quantum Drinfeld Hecke algebras.

\subsection{Diagonal actions}

We begin by comparing the PBW and Hochschild cohomological conditions for group actions that are strictly diagonal.  In this case, Hochschild cohomology is known.

\begin{thm}[\cite{Grimley}, Theorem 3.3]\label{gendiag}
Assume $G$ acts diagonally on $\Lambda_{\q}(V)$ by $^g v_i= g_i^i v_i$ for some $g^i_i \in \KK^*$, then $\HH^m(\Lambda_{\q}(V), \Lambda_{\q}(V) \rtimes G)$ is isomorphic to $$\bigoplus_{g \in G} \bigoplus_{\substack{\beta \in \mathbb{N}^n \\ |\beta|=m}} \bigoplus_{\substack{\alpha \in \{0,1\}^n \\ \beta-\alpha \in C_g}} \Span_{\KK} \{(v^{\alpha} g) \epsilon_{\beta}^*\}$$ where $C_g = \{\gamma \in (\Z^{\geq -1})^n | \forall~ i, \gamma_i=-1 \textrm{ or } (-1)^{\gamma_i}\prod_{j \neq i} q_{ij}^{\gamma_j} = g_i^i \}$. Moreover, $\HH^m(\Lambda_{\q}(V) \rtimes G)$ is isomorphic to the $G$-invariant subspace of $\HH^m(\Lambda_{\q}(V), \Lambda_{\q}(V) \rtimes G)$.
\end{thm}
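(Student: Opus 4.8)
The plan is to reduce to a single group element and then read off the cohomology from the explicit complex computing $\HH^*(\Lambda_{\q}(V),\Lambda_{\q}(V)g)$. Since each $\Lambda_{\q}(V)g$ is a sub-bimodule of $\Lambda_{\q}(V)\rtimes G$, the decomposition $\HH^m(\Lambda_{\q}(V),\Lambda_{\q}(V)\rtimes G)=\bigoplus_{g\in G}\HH^m(\Lambda_{\q}(V),\Lambda_{\q}(V)g)$ recorded in Section~4 reduces the problem to one summand, and the final ``moreover'' clause is then immediate from the isomorphism $\HH^*(\Lambda_{\q}(V)\rtimes G)\cong\HH^*(\Lambda_{\q}(V),\Lambda_{\q}(V)\rtimes G)^G$ (valid since $\ch\KK\nmid|G|$). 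So it suffices to compute the cohomology of $\Hom_{(\Lambda_{\q}(V))^e}(\mathbf{C},\Lambda_{\q}(V)g)$ with the differential $d^m$ made explicit earlier in Section~4.

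First I would specialize $d^m$ to the diagonal action by substituting ${}^gv_j=g_j^jv_j$, so that in the second form of $d^m$ the inner sum over $i$ collapses to its $i=j$ term. Because $v_j^2=0$, the $j$-th summand of $d^m((v^{\alpha}g)\epsilon_{\beta}^*)$ vanishes whenever $\alpha_j=1$, and when $\alpha_j=0$ it is a scalar multiple of $(v^{\alpha+[j]}g)\epsilon_{\beta+[j]}^*$. The key structural observation is that $d$ raises both $\alpha$ and $\beta$ by $[j]$, so the quantity $\gamma:=\beta-\alpha\in(\Z^{\geq-1})^n$ is preserved. Hence the Hom-complex splits as a direct sum of subcomplexes, one for each pair $(g,\gamma)$, whose underlying basis is indexed by those $\alpha\in\{0,1\}^n$ with $\alpha+\gamma\geq 0$; equivalently $\alpha_j=1$ is forced exactly when $\gamma_j=-1$, while $\alpha_j\in\{0,1\}$ is free when $\gamma_j\geq 0$.

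Next I would identify each $(g,\gamma)$-subcomplex as a degree-shifted tensor product, over the free coordinates $j$ (those with $\gamma_j\geq 0$), of two-term complexes $K_j\colon \KK\xrightarrow{c_j}\KK$, where $c_j$ is the scalar by which $d$ flips $\alpha_j$ from $0$ to $1$. A short computation, using $\alpha_j=0\Rightarrow\beta_j=\gamma_j$ and $q_{jl}=q_{lj}^{-1}$, shows that $c_j$ depends only on $(g,\gamma,j)$, that the sign $(-1)^{\sum_{l<j}\beta_l}$ factors as a constant times the Koszul tensor-product sign, and that $c_j=0$ if and only if $(-1)^{\gamma_j}\prod_{l\neq j}q_{jl}^{\gamma_l}=g_j^j$. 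This is precisely the defining condition of $C_g$ in coordinate $j$, so $\gamma\in C_g$ if and only if $c_j=0$ for every free coordinate $j$.

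Finally, over a field the cohomology of the $(g,\gamma)$-subcomplex is the tensor product of the cohomologies of the $K_j$. If $\gamma\notin C_g$, some free $c_j\neq 0$ makes $K_j$ acyclic and hence the whole subcomplex acyclic, contributing nothing; if $\gamma\in C_g$, every $c_j=0$, the differential vanishes identically, and the subcomplex equals its own cohomology $\bigoplus_{\alpha}\Span_\KK\{(v^{\alpha}g)\epsilon_{\beta}^*\}$ with $\beta=\alpha+\gamma$. Reindexing by $\beta$ with $|\beta|=m$ and $\alpha\in\{0,1\}^n$ satisfying $\beta-\alpha\in C_g$, and summing over $g\in G$, yields the stated formula. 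I expect the main obstacle to be Step~3: the sign and quantum-scalar bookkeeping needed to confirm that the coefficients assemble into a genuine tensor product of two-term complexes and that the vanishing condition on $c_j$ collapses cleanly to the $C_g$ condition after applying $q_{jl}=q_{lj}^{-1}$.
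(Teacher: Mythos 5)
Your argument is correct, but note that the paper itself offers no proof of this statement: Theorem \ref{gendiag} is imported verbatim from \cite{Grimley}, so there is no in-paper proof to compare against. What you have written is essentially a self-contained, hands-on version of the computation carried out in that reference. There, the resolution $\mathbf{C}$ is built as the total complex of a twisted tensor product of the two-periodic resolutions $\mathbf{D}_i$ of $\KK[v_i]/(v_i^2)$, and the cohomology is extracted using the K\"unneth-type results of \cite{BO} for twisted tensor products; your splitting of $\Hom_{(\Lambda_{\q}(V))^e}(\mathbf{C},\Lambda_{\q}(V)g)$ into subcomplexes indexed by $\gamma=\beta-\alpha$, each identified (after factoring the sign $(-1)^{\sum_{l<j}\beta_l}=(-1)^{\sum_{l<j}\gamma_l}(-1)^{\sum_{l<j}\alpha_l}$ into a constant times the Koszul sign) with a tensor product of two-term complexes $\KK\xrightarrow{c_j}\KK$ over the free coordinates, recovers exactly that structure without invoking the twisted-tensor-product formalism. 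I checked the coefficient computation you defer to Step 3: with $\alpha_j=0$ (hence $\beta_j=\gamma_j$) the $j$-th coefficient is $(-1)^{\sum_{l<j}\beta_l}\prod_{l<j}q_{jl}^{\gamma_l}\bigl(1-(-1)^{\gamma_j}g_j^j\prod_{l\neq j}q_{jl}^{-\gamma_l}\bigr)$, which vanishes precisely when $(-1)^{\gamma_j}\prod_{l\neq j}q_{jl}^{\gamma_l}=g_j^j$, i.e.\ the $C_g$ condition in coordinate $j$; so your reduction goes through. Two small points worth making explicit if you write this up: for coordinates with $\gamma_j=-1$ the factor is a one-term complex (no differential), which is why those coordinates impose no condition; and the ``moreover'' clause needs the observation that a diagonal action automatically satisfies the $G$-compatibility conditions of Lemma \ref{commute}, so that the $G$-invariants can be taken at the level of this subcomplex rather than the bar complex.
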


The curious reader should compare this theorem to the results of Naidu, Shroff, and Witherspoon \cite[Theorem 4.1]{NSW} in the non-truncated setting. 

With this result in mind, we can systematically determine the $n$-tuples $\gamma=\beta-\alpha$ satisfying $\gamma_i=-1 \textrm{ or } (-1)^{\gamma_i}\prod_{j \neq i} q_{ij}^{\gamma_j} = g_i^i$ for all $i \in \{1,2,...,n\}$ such that $|\beta|=2$.  We can simplify this expression further to isolate those which result in a truncated quantum Drinfeld Hecke algebra.

\begin{cor}\label{diag}
The subspace of $\HH^m(\Lambda_{\q}(V), \Lambda_{\q}(V) \rtimes G)$ consisting of constant 2-cocycles which send $v_i \otimes v_i \mapsto 0$ is isomorphic to $$\bigoplus_{g \in G} \bigoplus_{\substack{i \textrm{ s.t.} \\ \forall j \neq i \\ -q_{ij}=g_i^i}} \Span_{\KK} \{(g) \epsilon_{[i]+[j]}^*\} \bigoplus_{\substack{i \textrm{ s.t.} \\ \forall j,k \neq i \\ q_{ij}q_{ik}=g_i^i}} \Span_{\KK} \{(g) \epsilon_{[j]+[k]}^*\}.$$
\end{cor}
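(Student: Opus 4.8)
The plan is to obtain the statement as a direct specialization of Theorem~\ref{gendiag} to homological degree $2$, after which the two extra hypotheses (constancy and the vanishing on each $v_i \otimes v_i$) simply cut down the indexing sets appearing in that theorem; no new homological computation is needed.

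First I would record what ``constant'' forces. A constant $2$-cocycle has degree $-2$, so in the notation of Theorem~\ref{gendiag} its homological degree is $|\beta|=2$ while its value $v^{\alpha}g$ must lie in $\KK g$; hence $\alpha=(0,\dots,0)$ and the relevant multi-index is $\gamma=\beta-\alpha=\beta$. Thus only the summands $(g)\epsilon_{\beta}^*$ with $|\beta|=2$ and $\beta\in C_g$ can occur. The $\beta\in\NN^n$ with $|\beta|=2$ are exactly $\beta=2[i]$ and $\beta=[j]+[k]$ for $j<k$. Since $\epsilon_{2[i]}=1\otimes v_i\otimes v_i\otimes 1$, the requirement that the cocycle send $v_i\otimes v_i\mapsto 0$ is precisely the vanishing of every $\epsilon_{2[i]}^*$-component; this discards all $\beta=2[i]$ and leaves only $\beta=[j]+[k]$ with $j<k$.

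Next I would unwind the membership condition $\beta=[j]+[k]\in C_g$ coordinate by coordinate. Because each $\beta_i\geq 0$, the alternative $\gamma_i=-1$ never arises, so for every $i$ we need $(-1)^{\beta_i}\prod_{l\neq i}q_{il}^{\beta_l}=g_i^i$. Splitting into $i=j$, $i=k$, and $i\notin\{j,k\}$ collapses each product to a single factor and yields exactly $g_j^j=-q_{jk}$, $g_k^k=-q_{kj}$, and $g_i^i=q_{ij}q_{ik}$ for $i\neq j,k$ — which are precisely the diagonal constraints already recorded in Proposition~\ref{diagsupport}. Assembling $(g)\epsilon_{[j]+[k]}^*$ over all $g\in G$ and all pairs $j<k$ satisfying these constraints then gives the asserted isomorphism.

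Finally I would match this list against the two summands of the statement: the two \emph{in-pair} conditions $g_j^j=-q_{jk}$ and $g_k^k=-q_{kj}$ are the content of the summand $\Span_{\KK}\{(g)\epsilon_{[i]+[j]}^*\}$ taken over indices with $-q_{ij}=g_i^i$, while the \emph{out-of-pair} conditions $g_i^i=q_{ij}q_{ik}$ are the content of $\Span_{\KK}\{(g)\epsilon_{[j]+[k]}^*\}$ taken over witnessing indices with $q_{ij}q_{ik}=g_i^i$. I expect this last identification to be the only delicate point: one must keep straight the distinct roles of the two indices inside the pair (each carrying a $-q$ condition) and of the witnessing indices outside it (each carrying a $q\cdot q$ condition), so that the reorganized sum lines up term for term with the stated form. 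Everything else is immediate from Theorem~\ref{gendiag}.
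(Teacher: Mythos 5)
Your proposal is correct and follows essentially the same route as the paper: Corollary~\ref{diag} is obtained there exactly by specializing Theorem~\ref{gendiag} to $|\beta|=2$ with $\alpha=0$ (constancy), discarding the $\beta=2[i]$ summands via the $v_i\otimes v_i\mapsto 0$ condition, and unwinding $\beta=[j]+[k]\in C_g$ coordinatewise into the conditions $g_j^j=-q_{jk}$, $g_k^k=-q_{kj}$, and $g_i^i=q_{ij}q_{ik}$ for $i\neq j,k$, as in Proposition~\ref{diagsupport}. Your reading of the corollary's (admittedly loosely indexed) display as a conjunction of the in-pair and out-of-pair conditions on a single summand $(g)\epsilon_{[j]+[k]}^*$ is the intended one.
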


Using Corollary \ref{diag}, we can check fewer relations to generate truncated quantum Drinfeld Hecke algebras.  In the examples below, we include the full description of Hochschild 2-cohomology, given by Theorem \ref{gendiag}, for a more complete description of possible deformations.  Recall that the translation between Hochschild cohomology and truncated quantum Drinfeld Hecke algebras is given by $\sum_{g \in G} \kappa_g(v_j, v_i)g = \mu_1(\epsilon_{[i]+[j]})$ for $\mu_1$ a 2-cocycle and $i<j$.  

We start with an example of a \tqdha \, where the bound on the dimension of the parameter space is met.  See Section 2 for details on the parameter space.

\begin{example}\label{fulldimex}
Let $G = \left\{ \left(\begin{smallmatrix} -1 & 0 & 0 \\ 0& -1 & 0 \\ 0 & 0 & -1 \end{smallmatrix}\right), \left(\begin{smallmatrix} 1 & 0 & 0 \\ 0& 1 & 0 \\ 0 & 0 & 1 \end{smallmatrix}\right) \right\} = \{ g, I\}$. Let $q_{12} = -1$ and $q_{13}= q_{23}=1$. Then $\hat{\SH}_{\q,\kappa,1}$ is a \tqdha \, where $\hat{\SH}_{\q,\kappa,1}$ is generated by $v_1, v_2, v_3$ and $h \in G$ with relations 
\begin{align*}
    v_2v_1 & = -v_1v_2 + m_1 I,\\
    v_3v_1 &= v_1v_3 + m_2 g,\\
    v_3v_2 &= v_2v_3 + m_3 g,\\
    v_i^2 & = 0 \text{ for $i=1,2,3$ }
\end{align*}
where $m_1, m_2, m_3 \in \KK$ and the dimension of the parameter space is $3 = \binom{3}{2}$.
Compare the above to the $G$-invariant subspace of 2-cocycles for this example which is $$\Span_{\KK}\{(I) \epsilon_{1,1,0}^* , (g) \epsilon_{1,0,1}^* , (g)\epsilon_{0,1,1}^* \}.$$
\end{example}

We include a non-truncated example of a quantum Drinfeld Hecke algebra, given in \cite{Uhl}, to contrast with the truncated example immediately following it.

\begin{example}
\label{Ex3}
Let $g=\left(\begin{smallmatrix} -\omega^2 & 0 & 0 \\ 0 & -\omega & 0 \\ 0 & 0 & 1 \end{smallmatrix}\right)$, $G = \left\langle g \right\rangle$ and $q_{12}=-1, q_{23} =\omega = q_{31}$ where $\omega = e^{\frac{2 \pi i}{3}}$ and $V = \CC^3$. Then $\SH_{\q,\kappa}$ is a \qdha \, where $\SH_{\q,\kappa}$ is generated by $v_1,v_2,v_3$ and $h \in G$ with relations
\begin{align*}
v_2v_1 &= -v_1v_2 + m_0I + m_1g + m_2{g^2} + m_3{g^3} + m_4{g^4} +m_5{g^5}, \\
v_3v_2 &= \omega v_2v_3, \\ 
v_3v_1 &= \omega^2 v_1v_3, 
\end{align*}
and $m_i \in \CC$. Here we have that $\kappa_h(v_1,v_2)$ is arbitrary for all $h \in G$ and that $\dim(P_G) = 6 = |G|$.
\end{example}

Note that $\kappa_g(v_1,v_2) \neq 0$, but $g_1^1 \neq -q_{12}$ and $g_2^2 \neq -q_{21}$ as would be required in the truncated version.

\begin{example}
As in the above example, let $g= \left(\begin{smallmatrix} -\omega^2 & 0 & 0 \\ 0 & -\omega & 0 \\ 0 & 0 & 1 \end{smallmatrix}\right)$, $G = \left\langle g \right\rangle$ and quantum scalars $q_{12}=-1, q_{23} =\omega = q_{31}$ where $\omega = e^{\frac{2 \pi i}{3}}$ and $V = \CC^3$.  In the truncated setting, we have $\hat{\SH}_{\q,\kappa,1}$ is a \tqdha \, where $\hat{\SH}_{\q,\kappa,1}$ is generated by $v_1,v_2,v_3$ and $ h\in G$ with relations 
\begin{align*}
    v_2v_1 &= -v_1v_2 + m_0I,\\
     v_3v_2 & = \omega v_2v_3,\\
    v_3v_1 & = \omega^2 v_1v_3, \\
     v_i^2 & = 0 \text{ for $i=1,2,3$ }
\end{align*}
where $m_0 \in \CC$.  The dimension of the parameter space is 1.
Compare this to the $G$-invariant subspace of 2-cocycles for this example which is $$\Span_{\KK}\{(v_1v_2 g^2)\epsilon_{0,0,2}^*, (g^2) \epsilon_{0,0,2}^*, (v_1v_2 g^3)\epsilon_{0,0,2}^*, (v_2 g^3)\epsilon_{2,0,0}^*,$$
$$(v_1 g^3)\epsilon_{0,2,0}^*, (v_1v_2 g^4)\epsilon_{0,0,2}^*, (v_1v_2 I)\epsilon_{0,0,2}^*, (v_1v_2 I)\epsilon_{1,1,0}^*,$$ $$(v_1v_3 I)\epsilon_{1,0,1}^*, (v_2v_3 I)\epsilon_{0,1,1}^*, (I)\epsilon_{1,1,0}^*,\textrm{ and } (v_1v_2 g^5)\epsilon_{0,0,2}^*\}.$$  Of these, the only \textit{constant} generators are $(I)\epsilon_{1,1,0}^*$ and $(g^2)\epsilon_{0,0,2}^*$.  However, including the condition that cocycles must send $v_1 \otimes v_i \rightarrow 0$ excludes the latter cocycle.  
\end{example}

Note the Hochschild cohomology in this case gave rise to other deformations that are not truncated quantum Drinfeld Hecke algebras.

Comparing this example to the previous, we see that in the non-truncated setting there is some extra freedom in the dimension of the parameter space. Specifically, the dimension of the parameter space in the non-truncated setting is not bound by $\binom{n}{2}$.  However, in the truncated setting there is more freedom on the groups that we can consider as shown in Example ~\ref{lambdaautex}.  

We conclude the diagonal action section with an example with a nontrivial parameter on a non-identity group element. 

\begin{example}
Let $g=\left(\begin{smallmatrix} \omega & 0 & 0 \\ 0 & \omega^2 & 0 \\ 0 & 0 & 1 \end{smallmatrix}\right)$, $G = \left\langle g  \right\rangle$, and quantum scalars $q_{12}=-\omega = q_{23}= q_{31}$ where $\omega = e^{\frac{2 \pi i}{3}}$ and $V = \CC^3$.  $\hat{\SH}_{\q,\kappa,1}$ is a \tqdha \, where $\hat{\SH}_{\q,\kappa,1}$ is generated by $v_1,v_2,v_3$ and $h \in G$ with relations 
\begin{align*}
    v_2v_1 &= -\omega v_1v_2 + m_0g,\\
    v_3v_1 & = -\omega^2 v_1v_3,\\
    v_3v_2 & = -\omega v_2v_3,\\
     v_i^2 & = 0 \text{ for $i=1,2,3$ }
\end{align*}
where $m_0 \in \CC$. The dimension of the parameter space is 1. Note that when attempting the homological computation, we get the system of equations for $g$-acting $$(-1)^{\gamma_1}(-\omega)^{\gamma_2}(-\omega^2)^{\gamma_3}=\omega$$
$$(-1)^{\gamma_2}(-\omega^2)^{\gamma_1}(-\omega)^{\gamma_3}=\omega^2$$
$$(-1)^{\gamma_3}(-\omega)^{\gamma_1}(-\omega^2)^{\gamma_2}=1$$ where $\gamma$ is the difference in homological and polynomial degree.  This gives us the eligible representatives are 
$$(v_1v_2 g)\epsilon_{0,0,2}^*, (g)\epsilon_{1,1,0}^*, (g)\epsilon_{0,0,2}^*, (v_1 g^2)\epsilon_{0,2,0}^*$$ 
$$(v_1 v_2 I)\epsilon_{1,1,0}^*, (v_1v_3 I)\epsilon_{1,0,1}^*, \textrm{ and }(v_2 v_3 I)\epsilon_{0,1,1}^*.$$ The \textit{constant} terms are $(g)\epsilon_{1,1,0}^*, (g)\epsilon_{0,0,2}^*$ and the only one for which $v_i \otimes v_i \mapsto 0$ is $(g)\epsilon_{1,1,0}^*$ which again agrees with the conditions above.
\end{example}

\subsection{Complex reflection groups}
We consider the infinite family of complex reflection groups, $G(r,p,n)$ described by G.C. Shephard and J.A. Todd ~\cite{shephard_finite_1954} to compare with the results of Naidu and Witherspoon ~\cite{NW} who consider them in the non-truncated setting. The group $G(r,p,n)$ is the finite group of $n \times n$ monomial matrices, whose nonzero entries are $r^{th}$ roots of unity and the product of nonzero entries is a $\frac{r}{p}$ root of unity for a fixed $r,p,n \in \ZZ$ where $p \mid r$. 

\begin{example}[Symmetric Group]
\label{sym3}
Consider $\mathfrak{S}_3$ acting on $V=\CC^3$ and $q_{12}=q_{13}=q_{23} = -1$. Then $\hat{\SH}_{\q,\kappa,1}$ is a \tqdha \, generated by $v_1,v_2,v_3, $ and $h \in \mathfrak{S}_3$ with relations 
\begin{align*}
v_2v_1 &= -v_1v_2 + mI , \\
v_3v_1 &= -v_1v_3 + mI , \\
v_3v_2 &= -v_2v_3 + mI, \\
 v_i^2 & = 0 \text{ for $i=1,2,3$ }
\end{align*}
where $m \in \CC$ and $I$ is the identity of the group,
is a \tqdha.  The dimension of the parameter space is 1.

\end{example}

\begin{thm}
Let $\q = \{-1\}$.  For the infinite family of complex reflection groups $G(r,p,n)$, the only nontrivial \tqdha s are $G(1,1,n) = \mathfrak{S}_n$ whose parameter space has dimension 1 and $G(2,2,2)$ whose parameter space also has dimension 1.
\end{thm}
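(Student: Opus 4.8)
The plan is to collapse the six conditions of Theorem~\ref{ifftheorem} down to a single combinatorial invariance and then analyze it group by group. Since every $q_{ij}=-1$, we have $1+q_{ij}=0$, so by Lemma~\ref{LambdaAut} \emph{every} subgroup of $GL(V)$ acts on $\Lambda_{\q}(V)$ (the ordinary exterior algebra) by automorphisms; thus condition (i) is automatic for all $G(r,p,n)$. First I would apply Proposition~\ref{diagsupport}: if $\kappa_g(v_i,v_j)\neq 0$ then $g$ is diagonal with $g_i^i=-q_{ij}=1$, $g_j^j=-q_{ji}=1$, and $g_k^k=q_{ki}q_{kj}=1$ for $k\neq i,j$, forcing $g=I$. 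Hence any admissible $\kappa$ is supported on the identity, and I set $c_{rs}:=\kappa_I(v_r,v_s)$ for $r<s$. With $q_{ij}=-1$ and $h=I$ one checks that (iii) collapses (each coefficient is $q_{ik}q_{jk}v_k-v_k=v_k-v_k=0$, etc.), (v) collapses (coefficient $q_{ij}+1=0$), and (ii) is the defining normalization; condition (vi) holds because each column of a monomial matrix has a single nonzero entry, so all cross terms $g_i^r g_j^r$ with $i\neq j$ vanish.

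This leaves only condition (iv) with $g=I$. For a monomial $h\in G(r,p,n)$ with underlying permutation $\sigma$ and nonzero column entries $\xi_i=h_{\sigma(i)}^i$, I would compute $\det_{rsij}(h)=h_i^r h_j^s-q_{rs}h_j^r h_i^s$ and observe that, for fixed $r<s$, the only pair $i<j$ giving a nonzero minor is $\{i,j\}=\{\sigma(r),\sigma(s)\}$, contributing $\xi_r\xi_s$ (the $q_{rs}=-1$ sign exactly cancels the order reversal). Thus condition (iv) becomes
\[
c_{rs}=\xi_r\,\xi_s\,c_{\{\sigma(r),\sigma(s)\}}\qquad\text{for all }h\in G,\ r<s,
\]
so $P_G$ is precisely the space of constant $2$-forms invariant under this twisted action, and the classification is now purely combinatorial.

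Next I would test this against the diagonal subgroup of $G(r,p,n)$, namely $\operatorname{diag}(\omega^{a_1},\dots,\omega^{a_n})$ with $\omega=e^{2\pi i/r}$ and $\sum_i a_i\equiv 0\pmod p$. For these $\sigma=\mathrm{id}$, so the displayed relation forces $c_{rs}(1-\omega^{a_r+a_s})=0$; a nonzero $c_{rs}$ therefore requires $a_r+a_s\equiv 0\pmod r$ for \emph{every} admissible exponent vector. When $n\ge 3$ one may prescribe $(a_r,a_s)$ arbitrarily and absorb the congruence $\sum a_i\equiv 0\pmod p$ in a third coordinate (using that $\mathbb{Z}/r\twoheadrightarrow\mathbb{Z}/p$ since $p\mid r$), so $a_r+a_s$ ranges over all of $\mathbb{Z}/r$ and a nonzero $c_{rs}$ forces $r=1$; then $G=G(1,1,n)=\mathfrak{S}_n$, which acts transitively on unordered pairs, all $c_{rs}$ coincide, and $\dim P_G=1$. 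When $n=2$ the attainable values of $a_1+a_2$ are exactly the multiples of $p$ in $\mathbb{Z}/r$, so a nonzero $c_{12}$ forces $r=p$, i.e.\ $G=G(r,r,2)$, while any $G(r,p,2)$ with $r\neq p$ is already excluded.

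The hard part will be the remaining $n=2$ analysis, which must isolate the stated value $r=2$. Here the only pair is $\{1,2\}$, which is fixed (as an unordered pair) by every element, so the diagonal step alone leaves $c_{12}$ apparently free; the decisive input must come from feeding the reflections (anti-diagonal $h$, with $\sigma$ the transposition) into the invariance and tracking the quantum-minor sign with care. The subtle point is the interaction between the sign that the order-reversing permutation produces in $\det_{12ij}(h)$ and the defining constraint that the product $\xi_1\xi_2$ of nonzero entries be an $(r/p)$-th root of unity. Carrying out this sign bookkeeping is exactly what should determine which reflections are compatible with a nonzero constant $2$-form on the identity, thereby pinning down the admissible $r$, yielding $G(2,2,2)$ with $\dim P_G=1$ and completing the classification.
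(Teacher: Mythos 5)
Your overall route is the same as the paper's: Proposition~\ref{diagsupport} forces any admissible $\kappa$ to be supported on the identity when $\q=\{-1\}$, and the classification then rests entirely on condition (iv) of Theorem~\ref{ifftheorem}. Your collapse of conditions (ii), (iii), (v), (vi), the invariance formula $c_{rs}=\xi_r\xi_s\,c_{\{\sigma(r),\sigma(s)\}}$, and the $n\geq 3$ analysis via the diagonal subgroup (forcing $r=1$, hence $\mathfrak{S}_n$ with $\dim P_G=1$) are all correct and considerably more explicit than the paper's one-line assertion for that case.

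The genuine gap is the $n=2$ endgame, which you explicitly defer. After the diagonal elements force $r=p$, you are left with $G(r,r,2)$ and you assert that feeding the antidiagonal elements into condition (iv), with careful sign bookkeeping, ``must'' single out $r=2$. It does not. For $h$ antidiagonal with ${}^hv_1=\xi_1v_2$ and ${}^hv_2=\xi_2v_1$, the paper's definition gives $\det_{1212}(h)=h_1^1h_2^2-q_{12}h_2^1h_1^2=\xi_1\xi_2$, and the defining constraint of $G(r,r,2)$ is precisely that the product of nonzero entries $\xi_1\xi_2$ is an $(r/r)$-th root of unity, i.e.\ $\xi_1\xi_2=1$. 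So condition (iv) reads $c_{12}=c_{12}$ for \emph{every} element of $G(r,r,2)$ and every $r$; condition (iii) is vacuous when $n=2$, and (v), (vi) are satisfied as you note. The computation you promise therefore cannot isolate $r=2$, and as written your proposal does not establish the stated classification for $n=2$. (The paper's own proof is equally silent here: it treats only ``$r>1$ and $n\neq 2$'' and then asserts $G(2,2,2)$ works. If you carry out your final step honestly you must either locate an additional constraint that excludes $G(r,r,2)$ for $r>2$ --- none of (i)--(vi) appears to supply one --- or conclude that the statement itself needs re-examination; either way, the step you have left as ``the hard part'' is exactly where the argument currently fails.)
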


\begin{proof}
Proposition ~\ref{diagsupport} focuses our attention to group elements with diagonal action.  When $r>1$ and $n \neq 2$, it is easy to see that condition (iv) of Theorem ~\ref{ifftheorem} forces $\kappa \equiv 0$.  For $G(2,2,2)$, the identity of the group supports the parameter space.
\end{proof}

\begin{thm}
Let $\q = \{1\}$.  For the infinite family of complex reflection groups $G(r,p,n)$ there are no nontrivial \tqdha s.
\end{thm}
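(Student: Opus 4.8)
The plan is to combine Proposition \ref{diagsupport}, which severely restricts the support of any admissible parameter, with the $G$-equivariance condition (iv) of Theorem \ref{ifftheorem} applied to a well-chosen permutation in $G(r,p,n)$. I argue by contradiction: suppose $\hat{\SH}_{\q,\kappa,1}$ is a nontrivial \tqdha, so that $\kappa_g(v_i,v_j)\neq 0$ for some $g\in G(r,p,n)$ and some $i<j$. Setting all $q_{ij}=1$ in Proposition \ref{diagsupport} forces $g$ to act diagonally with $g_i^i=g_j^j=-1$ and $g_k^k=1$ for every $k\neq i,j$; in particular the only component of $\kappa_g$ that can be nonzero is $\kappa_g(v_i,v_j)$, since any other pair would require $g$ to carry its two $-1$ entries in different slots.

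First I would observe that the transposition matrix $\tau=\tau_{ij}$ interchanging $v_i$ and $v_j$ and fixing the remaining basis vectors always lies in $G(r,p,n)$: its nonzero entries are all $1$ (an $r$th root of unity) and their product is $1$ (an $\tfrac{r}{p}$th root of unity). Next I would note that $\tau$ commutes with $g$, because $g$ has equal entries $-1$ in exactly the two slots that $\tau$ swaps; hence $\tau^{-1}g\tau=g$.

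The crux is then a single application of condition (iv) with $h=\tau$ at the index pair $(i,j)$. Using ${}^\tau v_i=v_j$ and ${}^\tau v_j=v_i$ one computes $\tau_i^i=\tau_j^j=0$ and $\tau_j^i=\tau_i^j=1$, so that among the dummy pairs $k<l$ the unique surviving term is $(k,l)=(i,j)$, with value $\textrm{det}_{ij\,ij}(\tau)=\tau_i^i\tau_j^j-q_{ij}\tau_j^i\tau_i^j=-q_{ij}=-1$, the last equality being exactly where the hypothesis $\q=\{1\}$ enters. Condition (iv) therefore reads $\kappa_g(v_i,v_j)=\kappa_{\tau^{-1}g\tau}(v_i,v_j)=-\kappa_g(v_i,v_j)$, and since $\ch(\KK)\neq 2$ this yields $\kappa_g(v_i,v_j)=0$, contradicting our choice. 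Thus $\kappa\equiv 0$ and no nontrivial \tqdha\ exists.

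I expect the only delicate point to be the bookkeeping of the minor determinant: confirming that $(k,l)=(i,j)$ is the unique index pair contributing to the sum (the companion pair $(j,i)$ is excluded precisely because $i<j$) and that the sign is genuinely $-q_{ij}$. It is worth contrasting this with the $\q=\{-1\}$ case of the preceding theorem, where the identical computation gives $\textrm{det}_{ij\,ij}(\tau)=-q_{ij}=+1$, imposing no constraint and leaving room for nontrivial examples such as $\mathfrak{S}_n$; the sign flip produced by $q_{ij}=1$ is exactly what collapses the parameter space here.
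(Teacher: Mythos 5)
Your argument is correct and follows essentially the same strategy as the paper: Proposition \ref{diagsupport} pins the support of $\kappa$ to diagonal elements with $-1$ in exactly the slots $i,j$, and condition (iv) of Theorem \ref{ifftheorem} applied with a commuting permutation of $G(r,p,n)$ then kills $\kappa_g(v_i,v_j)$. Your choice of the transposition $\tau_{ij}$ (which commutes with $g$ and yields $\kappa_g(v_i,v_j)=\det_{ijij}(\tau)\,\kappa_g(v_i,v_j)=-\kappa_g(v_i,v_j)$) is a clean, correct refinement that handles $n=2$ and $n>2$ uniformly, whereas the paper conjugates by the $3$-cycle $(1\,2\,3)$ for $n>2$ and treats $n=2$ separately.
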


\begin{proof}
 For $n>2$, the only group elements that could support the parameter space would be elements that act diagonally on the vector space, sending two vectors to their negatives and the rest to themselves.  The presence of the permutation $(1 \ 2 \ 3) \in G(r,p,n)$ forces $\kappa \equiv 0$. (Conjugating the group element by $(1 \ 2 \ 3)$ results in another group element with diagonal action that commutes with original element and that in addition to condition (iv) from Theorem  ~\ref{ifftheorem} forces the dimension of the parameter space to be zero.)
 For $n=2$, either there is not an element that satisfies Corollary ~\ref{diagelement} or there is, but condition (iv) is violated by an element that commutes with that element.
\end{proof}

\subsection{A non-truncated \tqdha}

As shown in Section 3, \tqdha s only need the group to act on $\Lambda_{\q}(V)$ by automorphisms and not $S_{\q'}(V)$ as is needed in the non-truncated case.  To compare actions on the two algebras, we must now restrict $\q$ such that $q_{ii}=1$ for all $i \in \{1,2,...,n\}$.  We define $$S_{\q'}(V)=\KK \langle v_1, v_2, ..., v_n| v_j v_i=q_{ij} v_i v_j \textrm{ for } i,j \in \{1, 2, ..., n\} \rangle.$$  Note $S_{\q'}(V)$ is the quantum polynomial algebra $S_{-\q}(V)$, associated with the quantum scalar set $-\q$ and consistent with our definition of $\Lambda_{\q}(V)$.

We now provide an example of a group acting on $\Lambda_{\q}(V)$ by automorphism, which does \textit{not} act on $S_{\q'}(V)$ by automorphisms.  Thus, by considering the truncated case, we actually have some extra freedom that we do not have in the non-truncated case.  This also gives an example of a nonmonomial group.

\begin{example}\label{lambdaautex}
Let $g=\left(\begin{smallmatrix} \sqrt{1-\eta^3} & \eta & 0 \\ \eta^2 & -\sqrt{1-\eta^3} & 0 \\ 0 & 0 & 1 \end{smallmatrix}\right)$, $G = \left\langle g  \right\rangle$, and quantum scalars $q_{ij}=-1$, for all $i \neq j$ where $\eta = e^{\frac{2 \pi i}{5}}$ and $V = \CC^3$ in the truncated setting, we have $\hat{\SH}_{\q,\kappa,1}$ is a \tqdha \, where $\hat{\SH}_{\q,\kappa,1}$ is generated by $v_1,v_2,v_3$ and $h \in G$ with relations 
\begin{align*}
    v_2v_1 &= -v_1v_2 ,\\
    v_3v_1 & = - v_1v_3 + mI,\\
    v_3v_2 & = - v_2v_3 + \eta^3(1-\sqrt{1-\eta^3})mI,\\ 
    v_i^2 & = 0 \text{ for $i=1,2,3$ }
\end{align*}
where $m \in \CC$ and $I$ is the identity of the group. The dimension of the parameter space is 1. 

\end{example}
Note that $G$ does not act on $S_{\q'}(V)$ by automorphisms as $q_{ij} = -1$ for all $i \neq j$ and $G$ is not a monomial matrix group as required by ~\cite[Theorem 11.6]{LS}.  However, by changing the quantum scalars to $q_{12}=1$ and $q_{23} = -1 = q_{31}$, this group would act on $S_{\q'}(V)$ by automorphisms and also produces a nontrivial \qdha \, in that setting (see \cite{Uhl}). 



\end{document}